\numberwithin{equation}{section}
\newtheorem{Theorem}{Theorem}[section]
\newtheorem{Corollary}[Theorem]{Corollary}
\newtheorem{Lemma}[Theorem]{Lemma}
\newtheorem{Proposition}[Theorem]{Proposition}
 { \theoremstyle{definition}

\newtheorem{Remark}[Theorem]{Remark} }
\begin{document}

\newcommand{\arXivNumber}{1710.06534}

\renewcommand{\PaperNumber}{046}

\FirstPageHeading

\ShortArticleName{Lower Bounds for Numbers of Real Self-Dual Spaces in Problems of Schubert Calculus}

\ArticleName{Lower Bounds for Numbers of Real Self-Dual Spaces\\ in Problems of Schubert Calculus}

\Author{Kang LU}

\AuthorNameForHeading{K.~Lu}

\Address{Department of Mathematical Sciences, Indiana University~-- Purdue University Indianapolis,\\
402 North Blackford St, Indianapolis, IN 46202-3216, USA}
\Email{\href{mailto:lukang@iupui.edu}{lukang@iupui.edu}}

\ArticleDates{Received November 27, 2017, in final form May 07, 2018; Published online May 14, 2018}

\Abstract{The self-dual spaces of polynomials are related to Bethe vectors in the Gaudin model associated to the Lie algebras of types B and C. In this paper, we give lower bounds for the numbers of real self-dual spaces in intersections of Schubert varieties related to osculating flags in the Grassmannian. The higher Gaudin Hamiltonians are self-adjoint with respect to a nondegenerate indefinite Hermitian form. Our bound comes from the computation of the signature of this form.}

\Keywords{real Schubert calculus; self-dual spaces; Bethe ansatz; Gaudin model}

\Classification{14N99; 17B80; 82B23}

\section{Introduction}
It is well known that the problem of finding the number of real solutions to algebraic systems is very difficult, and not many results are known. In particular, the counting of real points in problems of Schubert calculus in
the Grassmannian has received a lot of attention, see \cite{EG,HHS,HS,HSZ,MT,SS,S} for example. In this paper, we give lower bounds for the numbers of real self-dual spaces in intersections of Schubert varieties related to osculating flags in the Grassmannian.

We define the Grassmannian $\operatorname{Gr}(N,d)$ to be the set of all $N$-dimensional subspaces of the $d$-dimensional space $\mathbb C_d[x]$ of polynomials in $x$ of degree less than $d$. In other words, we always assume for $X\in\operatorname{Gr}(N,d)$, we have $X\subset \mathbb C_d[x]$. Set $\mathbb P^1=\mathbb C\cup\{\infty\}$. Then, for any $z\in {\mathbb {P}}^1$, we have the osculating flag $\mathcal F(z)$, see~\eqref{F(inf)},~\eqref{F(z)}. Denote the Schubert cells corresponding to~$\mathcal F(z)$ by~$\Omega_{\xi}(\mathcal F(z))$, where $\xi=(d-N\geqslant \xi_1\geqslant \xi_2\geqslant \cdots \geqslant \xi_N\geqslant 0)$ are partitions. Then the set~$\Omega_{\bm\xi,\bm z}$ consists of spaces $X\in\operatorname{Gr}(N,d)$ such that $X$ belongs to the intersection of Schubert cells~$\Omega_{\xi^{(i)}}(\mathcal F(z_i))$ for $\bm z=(z_1,\dots,z_n)$ and $\bm\xi=\big(\xi^{(1)},\dots,\xi^{(n)}\big)$, where all $z_i\in{\mathbb {P}}^1$ are distinct and~$\xi^{(i)}$ are partitions, see~\eqref{Omega}. A~point $X\in\operatorname{Gr}(N,d)$ is called \emph{real} if it has a basis consisting of polynomials with all coefficients real. A lower bound for the number of real points in $\Omega_{\bm\xi,\bm z}$ is given in~\cite{MT}.

Let $X\in\operatorname{Gr}(N,d)$ be an $N$-dimensional subspace of polynomials in $x$. Let $X^\vee$ be the $N$-dimensional space of polynomials which are Wronskian determinants of $N-1$ elements of~$X$
\begin{gather*}
X^\vee=\big\{\det\big(d^{i-1}\varphi_j/dx^{i-1}\big)_{i,j=1}^{N-1}, \,\varphi_j(x)\in X \big\}.
\end{gather*}
The space $X$ is called \emph{self-dual} if $X^\vee= \psi\cdot X$ for some polynomial $\psi(x)$, see \cite{MV}. We define~$\mathrm s\Omega_{\bm\xi,\bm z}$ the subset of $\Omega_{\bm\xi,\bm z}$ consisting of all self-dual spaces. Our main result of this paper is a~lower bound for the number of real self-dual spaces in $\Omega_{\bm\xi,\bm z}$, see Corollary~\ref{bound thm}, i.e., a lower bound for the number of real points in $\mathrm s\Omega_{\bm\xi,\bm z}$, by following the idea of~\cite{MT}.

Let $\mathfrak g_N$ be the Lie algebra $\mathfrak{so}_{2r+1}$ if $N=2r$ or the Lie algebra $\mathfrak{sp}_{2r}$ if $N=2r+1$. We also set $\mathfrak g_3=\mathfrak{sl}_2$. It is known from \cite{LMV}, see also \cite[Section~6.1]{MV}, that if $\mathrm s\Omega_{\bm\xi,\bm z}$ is nonempty, then $\xi_i^{(s)}-\xi_{i+1}^{(s)}=\xi_{N-i}^{(s)}-\xi_{N-i+1}^{(s)}$ for $i=1,\dots,N-1$. Hence the $\mathfrak{sl}_N$-weight corresponding to the partition $\xi^{(s)}$ has certain symmetry and thus induces a $\mathfrak g_N$-weight $\lambda^{(s)}$, cf.~\eqref{A to BC}. Therefore, the sequence of partitions $\bm\xi$ with nonempty $\mathrm s\Omega_{\bm\xi,\bm z}$ can be expressed in terms of a sequence of dominant integral $\mathfrak g_N$-weights $\bm\lambda=\big(\lambda^{1},\dots,\lambda^{(n)}\big)$ and a sequence of nonnegative integers $\bm k=(k_1,\dots,k_n)$, see Lemma~\ref{lem sym weight new}. In particular, $k_i=\xi_N^{(i)}$. We call $\bm\xi,\bm z$ or $\bm\lambda,\bm k,\bm z$ the \emph{ramification data}.

As a subset of $\Omega_{\bm\xi,\bm z}$, $\mathrm s\Omega_{\bm\xi,\bm z}$ can be empty even if $\Omega_{\bm\xi,\bm z}$ is infinite. However, if $\mathrm s\Omega_{\bm\xi,\bm z}$ is nonempty, then $\mathrm s\Omega_{\bm\xi,\bm z}$ is finite if and only if $\Omega_{\bm\xi,\bm z}$ is finite. More precisely, if
\begin{gather*}
|\bm\xi|:=\sum_{i=1}^n\big|\xi^{(i)}\big|=N(d-N),
\end{gather*}
then the number of points in $\mathrm s\Omega_{\bm\xi,\bm z}$ counted with multiplicities equals the multiplicity of the trivial $\mathfrak g_N$-module in the tensor product $V_{\lambda^{(1)}}\otimes\dots\otimes V_{\lambda^{(n)}}$ of irreducible $\mathfrak g_N$-modules of highest weights $\lambda^{(1)},\dots,\lambda^{(n)}$. Since we are interested in the counting problem, from now on, we always assume that $|\bm\xi|=N(d-N)$.

For brevity, we consider $\infty$ to be real. If all $z_1,\dots, z_n$ are real, it follows from \cite[Theorem~1.1]{MTV3} that all points in $\mathrm s\Omega_{\bm\xi,\bm z}$ are real. Hence the number of real points is maximal possible in this case. Moreover, it follows from \cite[Corollary 6.3]{MTV4} that all points in $\mathrm s\Omega_{\bm\xi,\bm z}$ are multiplicity-free.

Then we want to know how many real points we can guarantee in other cases. In general, a~necessary condition for the existence of real points is that the set $\{z_1,\dots,z_n\}$ should be invariant under the complex conjugation and the partitions at the complex conjugate points are the same. In other words, $\big(\lambda^{(i)},k_i\big)=\big(\lambda^{(j)},k_j\big)$ provided $z_i=\bar{z}_j$. In this case we say that~$\bm z$,~$\bm\lambda$,~$\bm k$ are \emph{invariant under conjugation}. Moreover, the greatest common divisor of $X\in\mathrm s\Omega_{\bm\xi,\bm z}$ in this case is a real polynomial. Hence we reduce the problem to the case that $k_i=0$, for all $i=1,\dots,n$.

The derivation of the lower bounds is based on the identification of the self-dual spaces of polynomials with points of spectrum of higher Gaudin Hamiltonians of types B and C ($\mathfrak g_N$, $N\geqslant 4$) built in \cite{LMV} and \cite{MV}, see Theorem \ref{bi rep sgr}. We show that higher Gaudin Hamiltonians of types B and C have certain symmetry with respect to the Shapovalov form which is positive definite Hermitian, see Proposition \ref{sym bethe}. In particular, these operators are self-adjoint with respect to the Shapovalov form for real $z_1,\dots,z_n$ and hence have real eigenvalues. Therefore, it follows from Theorem \ref{bi rep sgr} that self-dual spaces with real $z_1,\dots,z_n$ are real.

If some of $z_1,\dots,z_n$ are not real, but the data $\bm z$, $\bm\lambda$, $\bm k$ are invariant under the complex conjugation, the higher Gaudin Hamiltonians are self-adjoint with respect to a nondegenerate (indefinite) Hermitian form. One of the key observations for computing the lower bound for the number of real points in $\mathrm s\Omega_{\bm\xi,\bm z}$ is the fact that the number of real eigenvalues of such operators is at least the absolute value of the signature of the Hermitian form, see Lemma \ref{linalg2}.

The computation of the signature of the form is reduced to the computation of the character values of products of symmetric groups on products of commuting transpositions. The formula for such character, similar to the Frobenius formula in \cite{F} and \cite[Proposition 2.1]{MT}, is given in Proposition \ref{char prop}. Consequently, we obtain our main result, a lower bound for the number of real points in $\mathrm s\Omega_{\bm\xi,\bm z}$ for $N\geqslant 4$, see Corollary \ref{bound thm}. The case $N=2$ is the same as that of \cite{MT} since every 2-dimensional space of polynomials is self-dual. By the proof of \cite[Theorem 4.19]{LMV}, the case $N=3$ is reduced to the case of \cite{MT}, see Section \ref{sec N=2,3}.

Based on the identification of the self-self-dual spaces of polynomials with points of spectrum of higher Gaudin Hamiltonians of type G$_2$ built in \cite{BM} and \cite{LM}, we expect that lower bounds for the numbers of real self-self-dual spaces in $\Omega_{\bm\xi,\bm z}$ with $N=7$ can also be given in a similar way as conducted in this paper.

It is also interesting to find an algorithm to compute all (real) self-dual spaces with prescribed ramification data. The solutions to the Bethe ansatz equations described in \cite{LMV1} can be used to find nontrivial examples of self-dual spaces.

The paper is organized as follows. We start with the standard notation of Lie theory in Section~\ref{lie sec} and computations of characters of a product of symmetric groups in Section~\ref{char sec}. Then we recall notation and definitions for osculating Schubert calculus and self-dual spaces in Section~\ref{schubert sec}. In Section \ref{gaudin sec} we recall the connections between Gaudin model of types B, C and self-dual spaces of polynomials. The symmetry of higher Gaudin Hamiltonians with respect to Shapovalov form and the key lemma from linear algebra are discussed in Section \ref{shap sec}. In Section \ref{bound sec} we prove our main results, see Theorem~\ref{sgn} and Corollary~\ref{bound thm}. Finally, we display some simple data computed from Corollary~\ref{bound thm} in Section~\ref{sec data}.

\section{Simple Lie algebras}\label{lie sec}
Let $\mathfrak g$ be a simple Lie algebra over $\mathbb C$ with Cartan matrix $A=(a_{i,j})_{i,j=1}^r$, where $r$ is the rank of $\mathfrak g$. Let $D=\operatorname{diag}(d_1,\dots,d_r)$ be the diagonal matrix with positive relatively prime integers $d_i$ such that $DA$ is symmetric.

Let $\mathfrak h\subset\mathfrak g$ be a Cartan subalgebra with the Cartan decomposition $\mathfrak g=\mathfrak n_-\oplus\mathfrak h\oplus\mathfrak n_+$. Fix simple roots $\alpha_1,\dots,\alpha_r$ in $\mathfrak h^*$. Let $\check\alpha_1,\dots,\check\alpha_r\in \mathfrak h$ be the corresponding coroots. Fix a nondegenerate invariant bilinear form $(\ ,\ )$ on $\mathfrak g$ such that $(\check\alpha_i,\check\alpha_j)=a_{i,j}/d_j$. The corresponding bilinear form on $\mathfrak h^*$ is given by $(\alpha_i,\alpha_j)=d_ia_{i,j}$.
We have $\langle \lambda,\check\alpha_i\rangle=2(\lambda,\alpha_i)/(\alpha_i,\alpha_i)$ for $\lambda\in\mathfrak h^*$. In particular, $\langle\alpha_j,\check\alpha_i\rangle=a_{i,j}$. Let $\omega_1,\dots,\omega_r\in\mathfrak h^*$ be the fundamental weights, $\langle \omega_j,\check\alpha_i\rangle=\delta_{i,j}$.

Let $\mathcal P=\{\lambda\in\mathfrak h^*\,|\,\langle \lambda,\check\alpha_i\rangle\in\mathbb Z,\, i=1,\dots,r\}$ and $\mathcal P^+=\{\lambda\in\mathfrak h^*\,|\,\langle \lambda,\check\alpha_i\rangle\in\mathbb Z_{\geqslant 0},\, i=1,\dots,r\}$ be the weight lattice and the cone of dominant integral weights.

Let $e_1,\dots,e_r\in \mathfrak n_+$, $\check\alpha_1,\dots,\check\alpha_r\in \mathfrak h$, $f_1,\dots,f_r\in\mathfrak n_-$ be the Chevalley generators of~$\mathfrak g$.

Given a $\mathfrak g$-module $M$, denote by $(M)^{\mathfrak g}$ the subspace of $\mathfrak g$-invariants in $M$. The subspace $(M)^{\mathfrak g}$ is the multiplicity space of the trivial $\mathfrak g$-module in $M$.

A sequence of nonnegative integers $\xi=(\xi_1,\dots,\xi_k)$ such that $\xi_1\geqslant \xi_2\geqslant \dots\geqslant \xi_k\geqslant 0$ is called {\it a partition with at most $k$ parts}. Set $|\xi|=\sum\limits_{i=1}^k \xi_i$.

For $\lambda\in\mathfrak h^*$, let $V_\lambda$ be the irreducible $\mathfrak g$-module with highest weight $\lambda$. For any $\mathfrak g$-weights $\lambda$ and $\mu$, it is well known that $\dim(V_{\lambda}\otimes V_{\mu})^{\mathfrak g}=\delta_{\lambda,\mu}$ for $\mathfrak g=\mathfrak{so}_{2r+1},\mathfrak{sp}_{2r}$.

For any Lie algebra $\mathfrak g$, denote by $\mathcal U(\mathfrak g)$ the universal enveloping algebra of~$\mathfrak g$.

\section{Characters of the symmetric groups}\label{char sec}
Let $\mathfrak g_N$ be the Lie algebra $\mathfrak{so}_{2r+1}$ if $N=2r$ or the Lie algebra $\mathfrak{sp}_{2r}$ if $N=2r+1$, $r\geqslant 2$. We also set $\mathfrak g_3=\mathfrak{sl}_2$. Let $\mathrm{G}_N$ be the respective classical group with Lie algebra $\mathfrak g_N$.

Let $\mathfrak S_k$ be the symmetric group permuting a set of $k$ elements. In this section we deduce a formula for
characters of a product of the symmetric groups acting on a tensor
product of finite-dimensional irreducible $\mathfrak g_N$-modules.

For each dominant integral $\mathfrak g_N$-weight $\lambda$, denote by $\bar \lambda=(\bar{\lambda}_1,\dots,\bar{\lambda}_r)$ the partition with at most $r$ parts such that
\begin{gather*}
2\langle \lambda,\check\alpha_i\rangle=\bar\lambda_i-\bar\lambda_{i+1},\qquad i=1,\dots,r-1,\qquad\text{and}\qquad \bar{\lambda}_r=\begin{cases}
\langle \lambda,\check\alpha_r\rangle, &\text{if }N=2r,\\
2\langle \lambda,\check\alpha_r\rangle, &\text{if }N=2r+1.
\end{cases}
\end{gather*}

Define an anti-symmetric Laurent polynomial $\Delta_N$ in $x_1,\dots,x_r$ as follows
\begin{gather}\label{eq:delta}
\Delta_N=\det\big(x_i^{N+1-2j}-x_i^{-(N+1-2j)}\big)_{i,j=1}^r.
\end{gather}
We call $\Delta_N$ the \emph{Vandermonde determinant of }$\mathfrak g_N$.

Let $\lambda$ be a dominant integral $\mathfrak g_N$-weight. It is well known that the character of the module~$V_{\lambda}$ is given by
\begin{gather}\label{eq:schur}
\mathcal S_{\lambda}^N(x_1,\dots,x_r)=\operatorname{tr}^{\vphantom1}_{V_{\lambda}}X_N=
\dfrac{\det\big(x_i^{\bar\lambda_j+N+1-2j}-x_i^{-(\bar\lambda_j+N+1-2j)}\big)_{i,j=1}^r}{\Delta_N},
\end{gather}
where $X_N\in\mathrm{G}_N$ is given by
\begin{gather*}
X_N=\begin{cases}
\operatorname{diag}\big(x_1^2,\dots,x_r^2,1,x_r^{-2},\dots,x_1^{-2}\big), &\text{if }N=2r,\\
\operatorname{diag}\big(x_1^2,\dots,x_r^2,x_r^{-2},\dots,x_1^{-2}\big), &\text{if }N=2r+1.
\end{cases}
\end{gather*}
We call $\mathcal S_{\lambda}^N$ the \emph{Schur function of $\mathfrak g_N$ associated with the weight $\lambda$}.

Note that $\mathcal S_\lambda^N$ are symmetric Laurent polynomials in $x_1,\dots,x_r$, $\mathcal S_\lambda^N\in \big(\mathbb C\big[x_1^{\pm 1},\dots,x_r^{\pm 1}\big]\big)^{\mathfrak S_r}$.

Let $\lambda^{(1)},\dots,\lambda^{(s)}$ be a sequence of dominant integral $\mathfrak g_N$-weights and $k_1,\dots,k_s$ a sequence of positive integers. Consider the tensor product of $\mathfrak g_N$-modules
\begin{gather*}
V_{\bm\lambda}=V_{\lambda^{(1)}}^{\otimes k_1}\otimes V_{\lambda^{(2)}}^{\otimes k_2}\otimes\dots\otimes V_{\lambda^{(s)}}^{\otimes k_s}
\end{gather*}
and its decomposition into irreducible $\mathfrak g_N$-submodules
\begin{gather*}
V_{\bm\lambda}=\bigoplus_\mu V_\mu\otimes M_{\bm\lambda,\mu}.
\end{gather*}

By permuting the corresponding tensor factors of $V_{\bm\lambda}$, the product of symmetric groups $\mathfrak S_{\bm k}=\mathfrak S_{k_1} \times \mathfrak S_{k_2} \times\cdots\times \mathfrak S_{k_s}$
acts naturally on $V_{\bm\lambda}$. Note that the $\mathfrak S_{\bm k}$-action commutes with the $\mathfrak g_N$-action, therefore the group $\mathfrak S_{\bm k}$ acts on the multiplicity space $M_{\bm\lambda,\mu}$ for all $\mu$.

For $\sigma=\sigma_1\times\sigma_2\times\dots\times\sigma_s\in \mathfrak S_{\bm k}$, $\sigma_i\in \mathfrak S_{k_i}$. Suppose all $\sigma_i$ are written as a product of disjoint cycles. Denote by $c_i$ the number of cycles in the product representing $\sigma_i$ and $l_{ij}$, $j=1,\dots,c_i$, the lengths of cycles. Note that $l_{i,1}+\dots+l_{i,c_i}=k_i$.

We then consider the value of the character of $\mathfrak S_{\bm k}$ corresponding to the representation $M_{\bm\lambda,\mu}$ on $\sigma$. Let $\chi^{\vphantom1}_{\bm\lambda,\mu}=\operatorname{tr}^{\vphantom1}_{M_{\bm\lambda,\mu}}$.

\begin{Proposition}\label{char prop}
 The character value $\chi^{\vphantom1}_{\bm\lambda,\mu}(\sigma)$ equals the coefficient of the monomial
\begin{gather*}x_1^{\bar\mu_1+N-1}x_2^{\bar\mu_2+N-3}\cdots x_r^{\bar\mu_r+N+1-2r}\end{gather*}
 in the Laurent polynomial
\begin{gather*}
 \Delta_N\cdot \prod_{i=1}^s \prod_{j=1}^{c_i} \mathcal S_{\lambda^{(i)}}^N\big(x_1^{l_{ij}},\dots,x_r^{l_{ij}}\big).
\end{gather*}
\end{Proposition}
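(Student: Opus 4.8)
The plan is to evaluate the joint trace $\operatorname{tr}_{V_{\bm\lambda}}(X_N\otimes\sigma)$ of the two commuting operators $X_N\in\mathrm{G}_N$ and $\sigma\in\mathfrak S_{\bm k}$ on $V_{\bm\lambda}$ in two different ways and then compare coefficients of a single monomial. First I would use the isotypic decomposition $V_{\bm\lambda}=\bigoplus_\mu V_\mu\otimes M_{\bm\lambda,\mu}$: since $X_N$ acts only through the factors $V_\mu$ and $\sigma$ only through the (finitely many nonzero) multiplicity spaces $M_{\bm\lambda,\mu}$, and the two actions commute,
\begin{gather*}
\operatorname{tr}_{V_{\bm\lambda}}(X_N\otimes\sigma)=\sum_\mu\operatorname{tr}_{V_\mu}(X_N)\cdot\operatorname{tr}_{M_{\bm\lambda,\mu}}(\sigma)=\sum_\mu\mathcal S_\mu^N(x_1,\dots,x_r)\,\chi_{\bm\lambda,\mu}(\sigma),
\end{gather*}
the first factor being identified with the Schur function of $\mathfrak g_N$ via~\eqref{eq:schur}.

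Next I would compute the same trace directly on $V_{\bm\lambda}=\bigotimes_{i=1}^sV_{\lambda^{(i)}}^{\otimes k_i}$. Writing each $\sigma_i$ as a product of disjoint cycles, the factor $V_{\lambda^{(i)}}^{\otimes k_i}$ splits into blocks $V_{\lambda^{(i)}}^{\otimes l_{ij}}$, each block being cyclically permuted by one cycle of $\sigma_i$, while $X_N^{\otimes k_i}$ preserves this block structure; hence the trace factorizes over all cycles of all the $\sigma_i$. For a single $l$-cycle $\tau$ acting on $W^{\otimes l}$ one has the elementary identity $\operatorname{tr}_{W^{\otimes l}}(g^{\otimes l}\tau)=\operatorname{tr}_W(g^l)$, verified by a one-line computation in a basis of $W$ (the right-hand side being the cyclic matrix product $\sum g_{a_1a_l}g_{a_2a_1}\cdots g_{a_la_{l-1}}$). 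Since $X_N^{l}$ is obtained from $X_N$ by the substitution $x_i\mapsto x_i^{l}$, this gives
\begin{gather*}
\operatorname{tr}_{V_{\bm\lambda}}(X_N\otimes\sigma)=\prod_{i=1}^s\prod_{j=1}^{c_i}\operatorname{tr}_{V_{\lambda^{(i)}}}\big(X_N^{l_{ij}}\big)=\prod_{i=1}^s\prod_{j=1}^{c_i}\mathcal S_{\lambda^{(i)}}^N\big(x_1^{l_{ij}},\dots,x_r^{l_{ij}}\big).
\end{gather*}

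Equating the two expressions and multiplying by $\Delta_N$, I would extract the coefficient of $x_1^{\bar\mu_1+N-1}x_2^{\bar\mu_2+N-3}\cdots x_r^{\bar\mu_r+N+1-2r}$ on both sides. On the right-hand side this is, by definition, the number appearing in the statement, so the proof reduces to checking that on the left-hand side this coefficient equals $\chi_{\bm\lambda,\mu}(\sigma)$. For this I would expand, for each dominant integral weight $\nu$ in the sum, the numerator $\Delta_N\mathcal S_\nu^N=\det\big(x_i^{\,a'_j}-x_i^{-a'_j}\big)_{i,j=1}^r$, where $a'_j=\bar\nu_j+N+1-2j$: every monomial occurring in it has the form $\pm x_1^{\pm a'_{\pi(1)}}\cdots x_r^{\pm a'_{\pi(r)}}$ for some $\pi\in\mathfrak S_r$. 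Because $\bar\mu$ is a partition and $N+1-2r\geqslant1$, the numbers $a_j:=\bar\mu_j+N+1-2j$ satisfy $a_1>a_2>\dots>a_r>0$; hence the monomial $x_1^{a_1}\cdots x_r^{a_r}$ can arise only with all inner signs equal to $+$, and then it forces $\{a'_j\}_j=\{a_j\}_j$ as sets, i.e.\ $\bar\nu=\bar\mu$ (both sequences being strictly decreasing), i.e.\ $\nu=\mu$ since $\mu\mapsto\bar\mu$ is injective on dominant integral weights; moreover for $\nu=\mu$ this monomial occurs exactly once, with coefficient $+1$. Thus the left-hand coefficient is $\chi_{\bm\lambda,\mu}(\sigma)$, which is the assertion.

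I expect this last step to be the main obstacle: one must carefully verify the positivity and strict monotonicity of the shifted parts $\bar\mu_j+N+1-2j$ together with the injectivity of $\mu\mapsto\bar\mu$, since this ``triangularity'' is precisely what guarantees that the chosen monomial detects the weight $\mu$ and does so with multiplicity one; everything preceding it is a routine manipulation of traces and characters.
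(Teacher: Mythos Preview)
Your argument is correct and is precisely the Frobenius-type computation the paper has in mind: the paper's own proof consists of the single sentence ``The proof of the statement is similar to that of \cite[Proposition~2.1]{MT},'' and your double evaluation of $\operatorname{tr}_{V_{\bm\lambda}}(X_N\otimes\sigma)$ together with the coefficient extraction via the Weyl numerator is exactly that argument, adapted from type A to types B and C. The only type-dependent point is the one you flagged---that $\bar\mu_j+N+1-2j>0$ and strictly decreases in $j$, and that $\mu\mapsto\bar\mu$ is injective---and your verification of it is sound (indeed $N+1-2r\geqslant1$ in both cases $N=2r$ and $N=2r+1$, and consecutive differences are $\bar\mu_j-\bar\mu_{j+1}+2\geqslant2$).
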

\begin{proof}
 The proof of the statement is similar to that of \cite[Proposition 2.1]{MT}.
\end{proof}

\section{Osculating Schubert calculus and self-dual spaces}\label{schubert sec}
Let $N$, $d\in \mathbb Z_{>0}$ be such that $N\leqslant d$. Consider $\mathbb P^1:=\mathbb C\cup\{\infty\}$. Set
\begin{gather*}
{\mathring{\mathbb P}}_n:=\big\{\bm z=(z_1,\dots,z_n)\in (\mathbb{P}^1)^n\,|\, z_i\ne z_j \ \text{for} \ 1\leqslant i<j\leqslant n\big\},\\
\mathbb R{\mathring{\mathbb P}}_n:=\big\{\bm z=(z_1,\dots,z_n)\in {\mathring{\mathbb P}}_n\, |\, z_i\in\mathbb R \ \text{or} \ z_i=\infty, \ \text{for} \ 1\leqslant i\leqslant n\big\}.
\end{gather*}
\subsection{Osculating Schubert calculus}
Let $\mathbb C_d[x]$ be the space of polynomials in $x$ with complex coefficients of degree less than $d$. We have $\dim \mathbb C_d[x] = d$. Let $\operatorname{Gr}(N, d)$ be the Grassmannian of all $N$-dimensional subspaces in $\mathbb C_d[x]$. The Grassmannian $\operatorname{Gr}(N,d)$ is a~smooth projective complex variety of dimension $N(d-N)$.

Let $\mathbb R_d[x]\subset\mathbb C_d[x]$ be the set of polynomials in $x$ with real coefficients of degree less than $d$. Let $\operatorname{Gr}^{\mathbb R}(N,d)\subset \operatorname{Gr}(N,d)$ be the set of subspaces which have a basis consisting of polynomials with all coefficients real. For $X\in\operatorname{Gr}(N,d)$ we have $X\in\operatorname{Gr}^{\mathbb R}(N,d)$ if and only if $\dim_{\mathbb R}(X\cap \mathbb R_d[x])=N$. We call such points $X$ {\it real}.

For a complete flag $\mathcal F=\{0\subset \mathscr F_1\subset \mathscr F_2\subset \dots\subset \mathscr F_d=\mathbb C_{d}[x]\}$ and a partition $\xi=(\xi_1,\dots,\xi_N)$ such that $\xi_1\leqslant d-N$, the Schubert cell $\Omega_{\xi}(\mathcal F)\subset \operatorname{Gr}(N,d)$ is given by
\begin{gather*}
\Omega_{\xi}(\mathcal F)=\big\{X\in\operatorname{Gr}(N,d)\,|\, \dim(X\cap \mathscr F_{d-j-\xi_{N-j}})=N-j,\\
\hphantom{\Omega_{\xi}(\mathcal F)=\big\{}{}\dim(X\cap \mathscr F_{d-j-\xi_{N-j}-1})=N-j-1\big\}.
\end{gather*}
Note that $\operatorname{codim} \Omega_{\xi}(\mathcal F)=|\xi|$.

Let $\mathcal F(\infty)$ be the complete flag given by
\begin{gather}\label{F(inf)}
\mathcal F(\infty)=\big\{0\subset \mathbb C_1[x]\subset\mathbb C_2[x]\subset\dots\subset\mathbb C_d[x]\big\}.
\end{gather}

The subspace $X$ is a point of $\Omega_{\xi}(\mathcal F(\infty))$ if and only if for every $i=1,\dots,N$, it contains a~polynomial of degree $d-i-\xi_{N+1-i}$.

For $z\in\mathbb C$, consider the complete flag
\begin{gather}\label{F(z)}
\mathcal F(z)=\big\{0\subset (x-z)^{d-1}\mathbb C_1[x]\subset(x-z)^{d-2}\mathbb C_2[x]\subset\dots\subset\mathbb C_d[x]\big\}.
\end{gather}

The subspace $X$ is a point of $\Omega_{\xi}(\mathcal F(z))$ if and only if for every $i=1,\dots,N$, it contains a~polynomial with a root at $z$ of order exactly $\xi_i+N-i$.

A point $z\in\mathbb C$ is called a {\it base point} for a subspace $X\subset \mathbb C_d[x]$ if $\varphi(z)=0$ for every $\varphi\in X$.

Let $\bm\xi{=}\big(\xi^{(1)},{\dots},\xi^{(n)}\big)$ be a sequence of partitions with at most $N$ parts and $\bm z{=}(z_1,{\dots},z_n){\in} {\mathring{\mathbb P}}_n$. Set $|\bm\xi|=\sum\limits_{s=1}^n\big|\xi^{(s)}\big|$.

Assuming $|\bm\xi|=N(d-N)$, denote by $\Omega_{\bm\xi,\bm z}$ the intersection of the Schubert cells
\begin{gather}\label{Omega}
\Omega_{\bm\xi,\bm z}= \bigcap_{s=1}^n\Omega_{\xi^{(s)}}(\mathcal F(z_s)).
\end{gather}
Note that due to our assumption, $\Omega_{\bm\xi,\bm z}$ is a finite subset of $\operatorname{Gr}(N,d)$.

Define a sequence of polynomials $\bm T=(T_1,\dots,T_{N})$ by the formulas
\begin{gather*}
T_i(x)=\prod_{s=1}^n (x-z_s)^{\xi_i^{(s)}-\xi_{i+1}^{(s)}},\qquad i=1,\dots,N,
\end{gather*}
where $\xi_{N+1}^{(s)}=0$. Here and in what follows we use the convention that $x-z_s$ is considered as the constant function $1$ if $z_s=\infty$. We say that $\bm T$ is \emph{associated with $\bm\xi$, $\bm z$}.

\subsection{Self-dual spaces}
Let $X\in\operatorname{Gr}(N,d)$ be an $N$-dimensional subspace of polynomials in~$x$. Given a polynomial~$\psi$ in~$x$, denote by $\psi\cdot X$ the space of polynomials of the form $\psi\cdot \varphi$ for all $\varphi\in X$.

Let $X^\vee$ be the $N$-dimensional space of polynomials which are Wronskian determinants of $N-1$ elements of~$X${\samepage
\begin{gather*}
X^\vee=\big\{\det\left(d^{i-1}\varphi_j/dx^{i-1}\right)_{i,j=1}^{N-1}, \, \varphi_j(x)\in X \big\}.
\end{gather*}
The space $X$ is called \emph{self-dual} if $X^\vee= \psi\cdot X$ for some polynomial~$\psi(x)$, see~\cite{MV}.}

Let $\operatorname{sGr}(N,d)$ be the set of all self-dual spaces in $\operatorname{Gr}(N,d)$. We call $\operatorname{sGr}(N,d)$ the \emph{self-dual Grassmannian}. The self-dual Grassmannian $\operatorname{sGr}(N,d)$ is an algebraic subset of $\operatorname{Gr}(N,d)$.

Denote by $\mathrm{s}\Omega_{\bm\xi,\bm z}$ the set of all self-dual spaces in $\Omega_{\bm\xi,\bm z}$
\begin{gather*}
\mathrm{s}\Omega_{\bm\xi,\bm z}=\Omega_{\bm\xi,\bm z}\bigcap \operatorname{sGr}(N,d).
\end{gather*}

Let $\mu$ be a dominant integral $\mathfrak g_N$-weight and $k\in\mathbb Z_{\geqslant 0}$. Define a partition $\mu_{A,k}$ with at most~$N$ parts by the rule: $(\mu_{A,k})_N=k$ and
\begin{gather}\label{A to BC}
(\mu_{A,k})_i-(\mu_{A,k})_{i+1}=\begin{cases}
 \langle \mu,\check\alpha_i\rangle, &\text{if }1\leqslant i\leqslant \big[\frac{N}{2}\big],\\
 \langle \mu,\check\alpha_{N-i}\rangle, &\text{if }\big[\frac{N}{2}\big]< i\leqslant N-1.
\end{cases}
\end{gather}
We call $\mu_{A,k}$ the partition \emph{associated with weight $\mu$ and integer $k$}.

Let $\bm\lambda=\big(\lambda^{(1)},{\dots},\lambda^{(n)}\big)$ be a sequence of dominant integral $\mathfrak g_N$-weights and let $\bm k=(k_1,{\dots},k_n)$ be an $n$-tuple of nonnegative integers. Then denote $\bm\lambda_{A,\bm k}=\big(\lambda_{A,k_1}^{(1)},\dots,\lambda_{A,k_n}^{(n)}\big)$ the sequence of partitions associated with $\lambda^{(s)}$ and $k_s$, $s=1,\dots,n$.

We use the notation $\mu_{A}=\mu_{A,0}$ and $\bm\lambda_{A}=\bm\lambda_{A,(0,\dots,0)}$.

\begin{Lemma}[\cite{LMV}]\label{lem sym weight new} If $\bm\xi$ is a sequence of partitions with at most $N$ parts such that $|\bm\xi|=N(d-N)$ and $\mathrm s\Omega_{\bm\xi,\bm z}$ is nonempty, then $\bm\xi$ has the form $\bm\xi=\bm\lambda_{A,\bm k}$ for a sequence of dominant integral $\mathfrak g_N$-weights $\bm\lambda=\big(\lambda^{(1)},\dots,\lambda^{(n)}\big)$ and a sequence of nonnegative integers $\bm k=(k_1,\dots,k_n)$. The pair $(\bm\lambda,\bm k)$ is uniquely determined by~$\bm \xi$.
\end{Lemma}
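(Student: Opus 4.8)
The plan is to unpack the definition of self-duality together with the structure of the Schubert cells $\Omega_{\xi^{(s)}}(\mathcal F(z_s))$ and show that the self-duality constraint forces precisely the symmetry relation already recorded in the introduction, namely $\xi_i^{(s)}-\xi_{i+1}^{(s)}=\xi_{N-i}^{(s)}-\xi_{N-i+1}^{(s)}$ for $i=1,\dots,N-1$. Concretely, I would first recall from \cite{LMV} (or \cite[Section~6.1]{MV}) that if $X\in\operatorname{sGr}(N,d)$ then $X^\vee=\psi\cdot X$ forces $X$ and its Wronski dual to have the same exponents (ramification indices) at every point of $\mathbb P^1$, up to the shift coming from $\psi$. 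Since the Schubert condition at $z_s$ in \eqref{F(z)} exactly prescribes that $X$ contain polynomials with roots at $z_s$ of orders $\xi_i^{(s)}+N-i$, and since the Wronskian construction sends the exponent sequence $(\xi_N^{(s)}, \xi_{N-1}^{(s)}+1,\dots,\xi_1^{(s)}+N-1)$ of $X$ at $z_s$ to a transformed sequence for $X^\vee$, comparing with the requirement $X^\vee=\psi\cdot X$ yields the palindromic condition on the consecutive differences of $\xi^{(s)}$.

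Once that symmetry is established, the second step is bookkeeping: given the palindrome condition on $\bm\xi$, I would define $k_s:=\xi_N^{(s)}$ and read off a dominant integral $\mathfrak g_N$-weight $\lambda^{(s)}$ from the (now symmetric) sequence of differences via the inverse of \eqref{A to BC}. That is, the differences $\xi_i^{(s)}-\xi_{i+1}^{(s)}$ for $1\leqslant i\leqslant \lfloor N/2\rfloor$ are declared to be $\langle \lambda^{(s)},\check\alpha_i\rangle$; these are nonnegative integers by the partition property, so $\lambda^{(s)}\in\mathcal P^+$, and the palindrome condition guarantees that the differences for $i>\lfloor N/2\rfloor$ are automatically consistent with \eqref{A to BC}. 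By construction $\big(\lambda^{(s)}\big)_{A,k_s}$ reproduces $\xi^{(s)}$, so $\bm\xi=\bm\lambda_{A,\bm k}$.

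For uniqueness, I would note that the map $(\mu,k)\mapsto \mu_{A,k}$ is injective on $\mathcal P^+\times\mathbb Z_{\geqslant 0}$: the value $k$ is recovered as the last part $(\mu_{A,k})_N$, and $\mu$ is recovered from the first $\lfloor N/2\rfloor$ consecutive differences, which determine $\langle\mu,\check\alpha_i\rangle$ for all $i=1,\dots,r$ because of the symmetry $\langle\mu,\check\alpha_i\rangle = (\mu_{A,k})_i-(\mu_{A,k})_{i+1}$ for $i\leqslant\lfloor N/2\rfloor$ and the relabeling $\langle\mu,\check\alpha_{N-i}\rangle$ for $i>\lfloor N/2\rfloor$ (with the middle coroot treated according to the two cases $N=2r$ or $N=2r+1$). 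Hence $(\bm\lambda,\bm k)$ is uniquely determined by $\bm\xi$.

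The main obstacle I anticipate is the first step: making precise exactly how self-duality $X^\vee=\psi\cdot X$ translates into the symmetry of ramification indices at each $z_s$. This requires knowing how the Wronskian operation acts on the local exponents of a subspace at a point, and identifying the polynomial $\psi$ (it should be, up to a constant, the product $\prod_s (x-z_s)^{2\xi_N^{(s)}+\text{(something)}}$ coming from the base-point contributions and the fixed degree constraint $|\bm\xi|=N(d-N)$). This is precisely the content of the cited result of \cite{LMV}, so in the write-up I would simply invoke it rather than reprove it; the genuinely new content here is the elementary but slightly fiddly passage between the combinatorics of symmetric partitions and dominant $\mathfrak g_N$-weights, which is pure linear algebra over $\mathbb Z$.
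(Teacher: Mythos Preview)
Your proposal is correct, and in fact it supplies more detail than the paper does: the paper gives no proof of this lemma at all, simply attributing it to \cite{LMV}. Your sketch correctly isolates the substantive step---that self-duality of any $X\in\mathrm s\Omega_{\bm\xi,\bm z}$ forces the palindrome condition $\xi_i^{(s)}-\xi_{i+1}^{(s)}=\xi_{N-i}^{(s)}-\xi_{N-i+1}^{(s)}$ on the ramification indices at each $z_s$---as the content of the cited reference, and then carries out the straightforward combinatorial inversion of \eqref{A to BC} to extract $(\bm\lambda,\bm k)$ and verify uniqueness; this is exactly the intended reading of the citation, just made explicit.
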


In what follows we write $\Omega_{\bm\lambda,\bm z}$, $\Omega_{\bm\lambda,\bm k,\bm z}$, $\mathrm s\Omega_{\bm\lambda,\bm z}$, $\mathrm s\Omega_{\bm\lambda,\bm k,\bm z}$ for $\Omega_{\bm\lambda_A,\bm z}$, $\Omega_{\bm\lambda_{A,\bm k},\bm z}$, $\mathrm s\Omega_{\bm\lambda_A,\bm z}$, $\mathrm s\Omega_{\bm\lambda_{A,\bm k},\bm z}$, respectively.

Note that $|\bm\lambda_{A,\bm k}|=|\bm\lambda_{A}|+N|\bm k|$, where $|\bm k|=k_1+\dots +k_n$. Suppose $|\bm\lambda_A|=N(d-N)$, there exists a bijection between $\Omega_{\bm\lambda,\bm z}$ in $\operatorname{Gr}(N,d)$ and $\Omega_{\bm\lambda,\bm k,\bm z}$ in $\operatorname{Gr}(N,d+|\bm k|)$ given by
\begin{gather}\label{mult}
\Omega_{\bm\lambda,\bm z}\rightarrow \Omega_{\bm\lambda,\bm k,\bm z},\qquad X \mapsto \prod_{s=1}^n(x-z_s)^{k_s}\cdot X.
\end{gather}
Moreover, \eqref{mult} restricts to a bijection between $\mathrm s\Omega_{\bm\lambda,\bm z}$ in $\operatorname{sGr}(N,d)$ and $\mathrm s\Omega_{\bm\lambda,\bm k,\bm z}$ in $\operatorname{sGr}(N,d+|\bm k|)$.

\section{Gaudin model}\label{gaudin sec}
Let $\mathfrak g[t] = \mathfrak g\otimes\mathbb C[t]$ be the Lie algebra of $\mathfrak g$-valued polynomials with the pointwise commutator. We call it the \emph{current algebra} of~$\mathfrak g$. We identify the Lie algebra~$\mathfrak g$ with the subalgebra $\mathfrak g\otimes 1$ of constant polynomials in~$\mathfrak g[t]$.

It is convenient to collect elements of $\mathfrak g[t]$ in generating series of a formal variable $x$. For $g\in \mathfrak g$, set
\begin{gather}\label{currents}
g(x)=\sum_{k=0}^\infty \big(g\otimes t^k\big)x^{-k-1}.
\end{gather}

For each $a\in\mathbb C$, we have the evaluation homomorphism $\mathrm{ev}_a\colon \mathfrak g[t]\to \mathfrak g$ where $\mathrm{ev}_a$ sends $g\otimes t^s$ to $a^sg$ for all $g\in \mathfrak g$ and $s\in \mathbb Z_{\geqslant 0}$. Its restriction to the subalgebra $\mathfrak g\subset \mathfrak g[t]$ is the identity map. For any $\mathfrak g$-module $M$, we denote by $M(a)$ the $\mathfrak g[t]$-module, obtained by pulling $M$ back through the evaluation homomorphism $\mathrm{ev}_a$. The $\mathfrak g[t]$-module $M(a)$ is called an \emph{evaluation module}. The generating series $g(x)$ acts on the evaluation module $M(a)$ by $g/(x-a)$.

The Bethe algebra $\mathcal B$ (the algebra of higher Gaudin Hamiltonians) for a simple Lie algebra~$\mathfrak g$ was described in \cite{FFRe}. The Bethe algebra $\mathcal B$ is a commutative subalgebra of $\mathcal U(\mathfrak g[t])$ which commutes with the subalgebra $\mathcal U(\mathfrak g)\subset \mathcal U(\mathfrak g[t])$. An explicit set of generators of the Bethe algebra in Lie algebras of types B, C, and D was given in \cite{M}.

\begin{Proposition}[\cite{FFRe, M}]\label{prop bethe BC} Let $N>3$.
 There exist elements $F_{ij}\in\mathfrak g_N$, $i,j=1,\dots,N$, and polynomials $B_s(x)$ in $d^k F_{ij}(x)/dx^k$, $s=1,\dots,N$, $k=0,\dots, N$, such that the Bethe algebra $\mathcal B$ of $\mathfrak g_N$ is generated by the coefficients of $B_s(x)$ considered as formal power series in $x^{-1}$.
\end{Proposition}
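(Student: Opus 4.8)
The statement is a synthesis of the general results of \cite{FFRe} with the explicit type-B and type-C constructions of \cite{M}, so the plan is to assemble these rather than to reprove them from scratch. First I would recall the abstract picture: by \cite{FFRe}, the Bethe algebra $\mathcal B\subset\mathcal U(\mathfrak g_N[t])$ arises from the Feigin--Frenkel centre $\mathfrak z(\widehat{\mathfrak g}_N)$ of $\mathcal U(\widehat{\mathfrak g}_N)$ at the critical level: each Segal--Sugawara vector produces, through the construction of \cite{FFRe}, a generating series of higher Gaudin Hamiltonians built from the currents \eqref{currents}, and $\mathcal B$ is generated by the coefficients of all such series. Since, by the Feigin--Frenkel theorem, $\mathfrak z(\widehat{\mathfrak g}_N)$ is a polynomial algebra freely generated by $r=\operatorname{rank}\mathfrak g_N$ Segal--Sugawara vectors together with all of their derivatives, $\mathcal B$ is already generated by the coefficients of finitely many generating series, and the remaining task is to exhibit such generators in a manifestly differential-operator form.

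For that I would invoke Molev's construction \cite{M}. It produces elements $F_{ij}\in\mathfrak g_N$, $i,j=1,\dots,N$, organised into an $N\times N$ matrix of currents $\mathcal F(x)=\sum_{i,j=1}^{N}F_{ij}(x)\otimes E_{ij}$ with entries in $\mathcal U(\mathfrak g_N[t])[[x^{-1}]]$, whose symmetry type is that of the defining module of the Langlands-dual Lie algebra ${}^L\mathfrak g_N$ --- which is $N$-dimensional in both cases (type B: ${}^L\mathfrak{so}_{2r+1}=\mathfrak{sp}_{2r}$, $N=2r$; type C: ${}^L\mathfrak{sp}_{2r}=\mathfrak{so}_{2r+1}$, $N=2r+1$). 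One then forms the differential operator $\mathcal D(x)=\partial^{N}+B_1(x)\partial^{N-1}+\dots+B_N(x)$, $\partial=d/dx$, as a suitable symmetrised trace of the product $(\partial+\mathcal F(x))\cdots(\partial+\mathcal F(x))$ of $N$ factors; \cite{M} shows that its coefficients $B_s(x)$ are Segal--Sugawara vectors and that, with their derivatives, they generate $\mathfrak z(\widehat{\mathfrak g}_N)$. This is consistent with the Feigin--Frenkel identification $\mathfrak z(\widehat{\mathfrak g}_N)\cong\mathcal W({}^L\mathfrak g_N)$, which is classically realised by differential operators of order $N$ (the dimension of that defining module), and explains why the order is $N$ and not $r$. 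Passing to images in $\mathcal U(\mathfrak g_N[t])$ then gives exactly the assertion.

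The shape of the $B_s(x)$ claimed in the statement is a matter of bookkeeping: expanding the product of $N$ factors $(\partial+\mathcal F(x))$, in each resulting term every symbol $\partial$ is either kept as a standalone operator or used to differentiate one of the coefficient series $F_{ij}(x)$ standing to its right, and with only $N$ factors present no single entry can be differentiated more than $N$ times. Hence each $B_s(x)$ is a noncommutative polynomial in the series $d^{k}F_{ij}(x)/dx^{k}$, $0\leqslant k\leqslant N$, as required.

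The genuinely substantive input --- which I would simply quote --- is the commutativity and completeness: that the coefficients of $B_1(x),\dots,B_N(x)$ pairwise commute and commute with $\mathcal U(\mathfrak g_N)$, and that they really exhaust $\mathfrak z(\widehat{\mathfrak g}_N)$. This is the content of \cite{FFRe} (via the vertex-algebra structure of the vacuum module at the critical level and the Feigin--Frenkel isomorphism with the classical $\mathcal W$-algebra of the dual) and of \cite{M} (via the Manin-matrix and $R$-matrix identities controlling the symmetrised traces in types B, C, D). If one wanted a self-contained proof, reproving this commutativity would be the main obstacle; for the present paper it suffices to cite \cite{FFRe, M}, the only remaining work being the elementary bookkeeping above on the differential order and on the realisation of the $F_{ij}$ inside $\mathfrak g_N$.
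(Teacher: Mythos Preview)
The paper does not prove this proposition at all: it is stated with the attribution \cite{FFRe,M} and no proof environment follows. Your proposal, which amounts to ``cite \cite{FFRe} for the abstract existence of the Bethe algebra via the Feigin--Frenkel centre and \cite{M} for explicit generators, then do elementary bookkeeping on differential orders,'' is therefore essentially the same approach the paper takes, and is correct in spirit.

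One technical point is off and worth flagging. You describe Molev's generators as entries of an $N\times N$ current matrix carrying the symmetry of the defining module of the Langlands dual ${}^L\mathfrak g_N$. That is not how \cite{M} is set up, and it is not what the paper itself uses when it unpacks this construction later in Section~\ref{proof}: there, for $\mathfrak g_N=\mathfrak{so}_{2r+1}$ (so $N=2r$), the elements $F_{ij}=E_{ij}-E_{2r+2-j,2r+2-i}$ are indexed by $i,j=1,\dots,2r+1$ and sit in the defining $(2r{+}1)$-dimensional representation of $\mathfrak g_N$ itself, and the generators are the symmetrised traces $\operatorname{tr}S^{(m)}(\tau+F[-1]_1)\cdots(\tau+F[-1]_m)$ with the Brauer-algebra symmetriser $S^{(m)}$. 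The order-$N$ universal operator $\mathcal D^{\mathcal B}$ and the connection to ${}^L\mathfrak g_N$-opers are genuine, but they are not obtained by writing down an $N\times N$ matrix of $\mathfrak g_N$-currents with dual-type symmetry; rather, the ${}^L\mathfrak g_N$ structure emerges from the Feigin--Frenkel isomorphism $\mathfrak z(\widehat{\mathfrak g}_N)\cong\mathcal W({}^L\mathfrak g_N)$. Your heuristic ``order $N$ $=$ dimension of the defining ${}^L\mathfrak g_N$-module'' is a good mnemonic, but the mechanism you sketch for it is not the one in \cite{M} or in this paper.
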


We denote $M(\infty)$ the $\mathfrak g_N$-module $M$ with the trivial action of the Bethe algebra $\mathcal B$, see \cite{LMV} for more detail.

For a collection of $\mathfrak g_N$-weights $\bm\lambda=\big(\lambda^{(1)},\dots,\lambda^{(n)}\big)$ and $\bm z=(z_1,\dots,z_n)\in \mathring{\mathbb P}_n$, we set
\begin{gather*}
V_{\bm\lambda,\bm z}=\bigotimes_{s=1}^n V_{\lambda^{(s)}}(z_s),
\end{gather*}
considered as a $\mathcal B$-module. We also denote $V_{\bm\lambda}$ the module $V_{\bm\lambda,\bm z}$ considered as a $\mathfrak g_N$-module.

Let $\partial_x$ be the differentiation with respect to $x$. Define a formal differential operator
\begin{gather*}
\mathcal D^{\mathcal B}=\partial_x^N+\sum_{i=1}^NB_i(x)\partial_x^{N-i},
\end{gather*}
where
\begin{gather}\label{eq:def Bij}
B_i(x)=\sum_{j=i}^\infty B_{ij}x^{-j}
\end{gather}
and $B_{ij}\in \mathcal U(\mathfrak g_N[t])$, $j\in\mathbb Z_{\geqslant i}$, $i=1,\dots,N$. The operator $\mathcal D^{\mathcal B}$ is called \emph{the universal operator}.

Let $\bm z=(z_1,\dots,z_n)\in\mathring{\mathbb P}_n$ and let $\bm\lambda=\big(\lambda^{(1)},\dots,\lambda^{(n)}\big)$ be a sequence of dominant integral $\mathfrak g_N$-weights. For every $g\in\mathfrak g_N$, the series $g(x)$ acts on $V_{\bm\lambda,\bm z}$ as a rational function of $x$.

Since the Bethe algebra $\mathcal B$ commutes with $\mathfrak g_N$, $\mathcal B$ acts on the invariant space $(V_{\bm\lambda,\bm z})^{\mathfrak g_N}$. For $b\in\mathcal B$, denote by $b(\bm\lambda,\bm z)\in\operatorname{End}((V_{\bm\lambda,\bm z})^{\mathfrak g_N})$ the corresponding linear operator.

Given a common eigenvector $v\in (V_{\bm\lambda,\bm z})^{\mathfrak g_N}$ of the operators $b(\bm\lambda,\bm z)$, denote by $b(\bm\lambda,\bm z;v)$ the corresponding eigenvalues, and define the scalar differential operator
\begin{gather*}
\mathcal D_v=\partial_x^{N}+\sum_{i=1}^N\sum_{j=i}^\infty B_{ij}(\bm\lambda,\bm z; v)x^{-j}\partial_x^{N-i}.
\end{gather*}

The following theorem connects self-dual spaces in the Grassmannian $\operatorname{Gr}(N,d)$ with the Gaudin model associated to $\mathfrak g_N$.

\begin{Theorem}[\cite{LMV}]\label{bi rep sgr}
 Let $N>3$. There exists a choice of generators $B_i(x)$ of the Bethe algebra~$\mathcal B$, such that for any sequence of dominant integral $\mathfrak g_N$-weights $\bm\lambda=\big(\lambda^{(1)},\dots,\lambda^{(n)}\big)$, any $\bm z\in \mathring{\mathbb P}_n$, and any $\mathcal B$-eigenvector $v\in (V_{\bm\lambda,\bm z})^{\mathfrak g_N}$, we have
\begin{gather*}
 \operatorname{Ker} \big((T_1\cdots T_{N})^{1/2}\cdot \mathcal D_v\cdot(T_1\cdots T_{N})^{-1/2}\big)\in \mathrm s\Omega_{\bm\lambda,\bm z},
\end{gather*}
 where $\bm T=(T_1,\dots,T_N)$ is associated with $\bm\lambda_A$, $\bm z$.

 Moreover, if $|\bm\lambda_A|=N(d-N)$, then this defines a bijection between the joint eigenvalues of~$\mathcal B$ on $(V_{\bm\lambda,\bm z})^{\mathfrak g_N}$ and $\mathrm s\Omega_{\bm\lambda,\bm z}\subset\operatorname{Gr}(N,d)$.
\end{Theorem}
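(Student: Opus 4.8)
The plan is to produce the map $v\mapsto X$ explicitly from the universal differential operator, to recognize $X$ as a point of $\mathrm s\Omega_{\bm\lambda,\bm z}$ by a local (Fuchsian) analysis of $\mathcal D_v$ at its singular points, and then to upgrade this map to a bijection by a dimension count. Throughout I would lean on the type-A picture of Mukhin--Tarasov--Varchenko, the present statement being its type-B/C self-dual refinement. The algebraic input I would isolate first is the \emph{self-duality of the universal operator}: for the Molev generators of Proposition~\ref{prop bethe BC}, suitably normalized, the operator $\mathcal D^{\mathcal B}=\partial_x^N+\sum_i B_i(x)\partial_x^{N-i}$ has the property that its formal conjugate equals $\mathcal D^{\mathcal B}$ conjugated by a scalar factor built from $\mathcal B$ itself. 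This is what encodes, universally, the relation $X^\vee=\psi\cdot X$. Evaluating on $V_{\bm\lambda,\bm z}=\bigotimes_s V_{\lambda^{(s)}}(z_s)$ and on a $\mathcal B$-eigenvector $v\in(V_{\bm\lambda,\bm z})^{\mathfrak g_N}$, the scalar operator $\mathcal D_v$ inherits a symmetry $\mathcal D_v^\dagger=g\cdot\mathcal D_v\cdot g^{-1}$ for an explicit rational $g=g(\bm\lambda,\bm z)$.

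Next I would carry out the Fuchsian analysis. Since every current $g(x)$ acts on $V_{\bm\lambda,\bm z}$ as $\sum_s g^{(s)}/(x-z_s)$, the operator $\mathcal D_v$ is Fuchsian with singular points among $z_1,\dots,z_n,\infty$. Computing indicial equations, at $z_s$ the exponents are $0,1,\dots,N-2$ together with one further exponent determined by the Bethe algebra acting on $V_{\lambda^{(s)}}(z_s)$, and at $\infty$ they are governed by the trivial weight $\mu=0$; conjugating by $(T_1\cdots T_N)^{1/2}$, with $\bm T$ associated with $\bm\lambda_A,\bm z$, turns these into the sets $\{\xi^{(s)}_i+N-i\}_{i=1}^N$ at $z_s$ and $\{0,1,\dots,N-1\}$ at $\infty$, i.e.\ precisely the conditions for membership in $\Omega_{\xi^{(s)}}(\mathcal F(z_s))$ with $\bm\xi=\bm\lambda_A$ and for triviality of the partition at infinity, cf.\ \eqref{F(z)}, \eqref{A to BC}, \eqref{Omega}. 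The essential nontrivial input here is that $\mathcal D_v$ has trivial local monodromy (no logarithmic solutions), which follows from $v$ being a joint eigenvector of the \emph{commutative} algebra $\mathcal B$ --- the monodromy-free phenomenon of the Gaudin model, imported from type A. Hence $(T_1\cdots T_N)^{1/2}\mathcal D_v(T_1\cdots T_N)^{-1/2}$ has an $N$-dimensional polynomial kernel $X\in\Omega_{\bm\lambda,\bm z}$.

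To see $X\in\mathrm s\Omega_{\bm\lambda,\bm z}$ I would use the classical dictionary $\ker\mathcal D^\dagger=W(Y)^{-1}Y^\vee$ for a monic order-$N$ operator $\mathcal D$ with kernel $Y$, where $W(Y)$ is the Wronskian of a basis of $Y$: feeding the symmetry $\mathcal D_v^\dagger=g\,\mathcal D_v\,g^{-1}$ (transported through the conjugation) into it yields $X^\vee=\psi\cdot X$ with the expected $\psi$, so the map $v\mapsto X$ is well defined into $\mathrm s\Omega_{\bm\lambda,\bm z}$. The delicate point is bookkeeping: the square-root factor $(T_1\cdots T_N)^{1/2}$ must be chosen so that the Schubert exponents of the previous step and this self-duality emerge \emph{simultaneously}, and checking that compatibility is essentially the technical core of \cite{LMV}.

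Finally, assume $|\bm\lambda_A|=N(d-N)$. Injectivity is formal: $X$ determines the monic operator it bounds, hence (un-conjugating) $\mathcal D_v$, hence the eigenvalues $B_{ij}(\bm\lambda,\bm z;v)$ of all generators of $\mathcal B$, hence the joint eigenvalue. For surjectivity and the multiplicity count I would match dimensions: the number of points of $\mathrm s\Omega_{\bm\lambda,\bm z}$ counted with multiplicity equals $\dim\big(V_{\lambda^{(1)}}\otimes\dots\otimes V_{\lambda^{(n)}}\big)^{\mathfrak g_N}=\dim(V_{\bm\lambda,\bm z})^{\mathfrak g_N}$ (the first equality recalled in the introduction from \cite{MV}), which is also the total dimension on which $\mathcal B$ acts. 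I expect this last point --- that $\mathcal B$ acts on $(V_{\bm\lambda,\bm z})^{\mathfrak g_N}$ with the full spectrum, forcing the injective map above to be onto with the right multiplicities, i.e.\ completeness of the Bethe ansatz on the invariant subspace --- to be the main obstacle. I would establish it not by computation but by a deformation argument: for generic $\bm z$ (e.g.\ real and distinct, where all self-dual spaces are already known to be real and multiplicity-free) the spectrum of $\mathcal B$ is simple and the count is transparent, and one propagates to arbitrary $\bm z$ via flatness of the family together with the finiteness and surjectivity of the self-dual Wronski map from \cite{MV}.
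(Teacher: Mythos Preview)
The paper does not prove this theorem at all: it is stated with the citation \cite{LMV} and used as a black box. There is therefore no ``paper's own proof'' to compare against; everything you wrote goes beyond what the present paper supplies.

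That said, your sketch is broadly the right shape for how the result is established in \cite{LMV}, and the ingredients you identify (self-adjointness of the universal operator, Fuchsian analysis of $\mathcal D_v$, the Wronskian dictionary $\ker\mathcal D^\dagger=W(Y)^{-1}Y^\vee$, and a dimension/deformation count for bijectivity) are the right ones. One inaccuracy worth flagging: your description of the local exponents of $\mathcal D_v$ at $z_s$ as ``$0,1,\dots,N-2$ together with one further exponent'' is only correct when $\lambda^{(s)}$ is a fundamental weight (so that the associated partition $\lambda^{(s)}_A$ has a single nonzero row). For a general dominant integral $\lambda^{(s)}$ the exponents before conjugation are already a genuine $N$-tuple determined by the full partition $\lambda^{(s)}_A$, and the conjugation by $(T_1\cdots T_N)^{1/2}$ shifts all of them, not just one. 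This does not break your argument, but the indicial computation you allude to has to be redone in that generality. The other point you correctly flag as the main obstacle --- trivial monodromy of $\mathcal D_v$ and completeness of the $\mathcal B$-spectrum on $(V_{\bm\lambda,\bm z})^{\mathfrak g_N}$ --- is indeed where the substance of \cite{LMV} lies, and it is not something one can extract from the present paper.
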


\section{Shapovalov form and the key lemma}\label{shap sec}
\subsection{Shapovalov form}
Define the anti-involution $\varpi\colon \mathfrak g_N \to \mathfrak g_N $ sending $e_1, \dots,e_r$, $\check\alpha_1,\dots,\check\alpha_r$, $f_1,\dots, f_r$ to $f_1,\dots, f_r$, $\check\alpha_1,\dots,\check\alpha_r$, $e_1, \dots,e_r$, respectively.

For any dominant integral $\mathfrak g_N$-weight $\lambda$, the irreducible $\mathfrak g_N$-module $V_\lambda$ admits a positive definite Hermitian form $(\cdot,\cdot)_\lambda$ such that $(gv,w)_\lambda= (v,\overline{\varpi(g)}w)_\lambda$ for any $v,w\in V_\lambda$ and $g\in\mathfrak g_N$. Such a form is unique up to multiplication by a positive real number. We call this form the \emph{Shapovalov form}.

Let $\bm\lambda=\big(\lambda^{(1)},\dots,\lambda^{(n)}\big)$ be a sequence of dominant integral $\mathfrak g_N$-weights. We define the positive definite Hermitian form $(\cdot,\cdot)_{\bm\lambda}$ on the tensor product $V_{\bm\lambda}$ as the product of Shapovalov forms on the tensor factors. The form $(\cdot,\cdot)_{\bm\lambda}$ induces a positive definite Hermitian form $(\cdot|\cdot)_{\bm\lambda}$ on $(V_{\bm\lambda,\bm z})^{\mathfrak g_N}$.

\begin{Proposition} \label{sym bethe}For any $i=1,\dots,N$, $j\in\mathbb Z_{\geqslant i}$, and any $v,w\in (V_{\bm\lambda,\bm z})^{\mathfrak g_N}$, we have
\begin{gather*}
 \big(B_{ij}(\bm\lambda,\bm z)v|w\big)_{\bm\lambda}= \big(v|B_{ij}(\bm\lambda,\bar{\bm z})w\big)_{\bm\lambda},
\end{gather*}
 where $B_{ij}$ are given by \eqref{eq:def Bij}, $\bar{\bm z}=(\bar z_1,\dots,\bar z_n)$ and the bar stands for the complex conjugation.
\end{Proposition}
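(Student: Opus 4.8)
The plan is to reduce the claimed Hermitian symmetry to a purely algebraic invariance of the generators $B_{ij}$, and then to extract that invariance from the construction of the Bethe algebra. Recall that on an evaluation module the element $g\otimes t^k\in\mathfrak g_N[t]$ acts as $\sum_{s=1}^n z_s^k\,g^{(s)}$, where $g^{(s)}$ denotes $g$ acting in the $s$-th tensor factor; in particular the $\mathfrak g_N$-action on $V_{\bm\lambda,\bm z}$, hence the invariant subspace $(V_{\bm\lambda,\bm z})^{\mathfrak g_N}$ and the induced form $(\cdot|\cdot)_{\bm\lambda}$, are independent of $\bm z$. Introduce the conjugate-linear anti-automorphism $\Phi$ of $\mathcal U(\mathfrak g_N[t])$ determined by $\Phi(g\otimes t^k)=\overline{\varpi(g)}\otimes t^k$ (so $\Phi(cu)=\bar c\,\Phi(u)$ and $\Phi(u_1u_2)=\Phi(u_2)\Phi(u_1)$). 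Using the defining property $(gv,w)_{\lambda^{(s)}}=(v,\overline{\varpi(g)}w)_{\lambda^{(s)}}$ of the Shapovalov form in each tensor factor together with the sesquilinearity of $(\cdot,\cdot)_{\bm\lambda}$, I would check on the generators $g\otimes t^k$ and extend multiplicatively that, writing $\rho_{\bm z}$ for the action on $V_{\bm\lambda,\bm z}$,
\begin{gather*}
\big(\rho_{\bm z}(u)v,w\big)_{\bm\lambda}=\big(v,\rho_{\bar{\bm z}}(\Phi(u))w\big)_{\bm\lambda},\qquad u\in\mathcal U(\mathfrak g_N[t]),\quad v,w\in V_{\bm\lambda};
\end{gather*}
the change $\bm z\mapsto\bar{\bm z}$ appears precisely because each scalar $z_s^k$ is conjugated when moved across the form. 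Since $B_{ij}$ commutes with $\mathfrak g_N$, the operator $\rho_{\bm z}(B_{ij})$ preserves $(V_{\bm\lambda,\bm z})^{\mathfrak g_N}$ and restricts there to $B_{ij}(\bm\lambda,\bm z)$ (and likewise with $\bar{\bm z}$), so the proposition follows once one shows $\Phi(B_{ij})=B_{ij}$ for all $i,j$, equivalently $\Phi(B_i(x))=B_i(x)$ with $\Phi$ acting on coefficients.

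For the latter, observe first that by Proposition~\ref{prop bethe BC} the generators may be chosen so that the elements $F_{ij}$ lie in the split real form of $\mathfrak g_N$ and the polynomials $B_s(x)$ have real (indeed rational) coefficients; consequently $\Phi$ agrees, on all the currents $d^kF_{ij}(x)/dx^k$ out of which $B_i(x)$ is built, with the $\mathbb C$-linear anti-automorphism $\widetilde\varpi$ of $\mathcal U(\mathfrak g_N[t])$ extending $\varpi$ by $t^k\mapsto t^k$. Thus $\Phi(B_i(x))=\widetilde\varpi(B_i(x))$, and it remains to prove $\widetilde\varpi(B_i(x))=B_i(x)$. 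Here I would argue: in the realization $\mathfrak g_N\subset\mathfrak{gl}_N$ used in \cite{M}, Molev's generators $B_s(x)$ are by construction invariant under the anti-automorphism $\widetilde t$ induced by transposition with respect to the antidiagonal, $E_{ij}\mapsto E_{j'i'}$ with $i'=N+1-i$, extended trivially in $t$; this is the structural transpose symmetry of the symmetrized, Berezinian-type determinantal expression defining the $B_s(x)$. Both $\widetilde\varpi$ and $\widetilde t$ fix $t$ and preserve $\mathfrak h\otimes 1$ (as a set), so $\widetilde t^{\,-1}\circ\widetilde\varpi$ is a $\mathbb C[t]$-linear automorphism of $\mathfrak g_N[t]$ restricting on $\mathfrak g_N$ to an automorphism that fixes the Cartan setwise; since the Dynkin diagrams of types B and C carry no nontrivial symmetry, this automorphism is inner, say $\widetilde t^{\,-1}\widetilde\varpi=\operatorname{Ad}(g)$ with $g\in\mathrm{G}_N$. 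As $B_{ij}$ commutes with $\mathfrak g_N$ it is fixed by $\operatorname{Ad}(g)$, whence $\widetilde\varpi(B_{ij})=\widetilde t\big(\operatorname{Ad}(g)(B_{ij})\big)=\widetilde t(B_{ij})=B_{ij}$, as needed.

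The step I expect to be the main obstacle is the transpose symmetry $\widetilde t(B_s(x))=B_s(x)$ of the concrete generators of \cite{M}: making it precise requires unwinding Molev's construction of the generators of the Feigin--Frenkel centre in types B and C---the symmetrizer in the Brauer algebra and the resulting noncommutative-characteristic-polynomial-type formula for the $B_s(x)$---and checking that this expression is literally unchanged under the antidiagonal transpose of the current matrix. Everything else---the reduction of the first paragraph, the reality observation, and the inner-automorphism step---is routine. An alternative to this last step would be to transport the analogous, already established, transpose-and-conjugation symmetry of the $\mathfrak{gl}_N$ Bethe algebra through the embedding of the type B/C Wronski picture into the $\mathfrak{gl}_N$ one from \cite{LMV}; I expect the direct route through \cite{M} to be shorter.
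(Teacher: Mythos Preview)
Your reduction in the first paragraph is exactly right and matches what the paper does implicitly: the whole proposition reduces to showing that the $\mathbb C$-linear anti-involution $\widetilde\varpi$ of $\mathcal U(\mathfrak g_N[t])$ extending $\varpi$ (trivially in $t$) fixes each generator $B_{ij}$. The observation that the explicit generators have real coefficients, so that the conjugate-linear $\Phi$ coincides with $\widetilde\varpi$ on them, is also correct.

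Where your route diverges from the paper is the detour through the antidiagonal transpose $\widetilde t$ and the inner-automorphism argument. In the realization used in \cite{M,MM} (and reproduced in the paper), $F_{ij}=E_{ij}-E_{j',i'}$ inside $\mathfrak{gl}_{2r+1}$ (resp.\ $\mathfrak{gl}_{2r}$), and the Chevalley anti-involution $\varpi$ is literally the \emph{ordinary} matrix transpose $F_{ij}\mapsto F_{ji}$. The paper therefore dispenses with any factoring through $\widetilde t$ and proves $\widetilde\varpi$-invariance of the generators directly: one applies $\varpi$ to the Brauer-symmetrized trace
\[
\operatorname{tr}\,S^{(m)}(\tau+F[-1]_1)\cdots(\tau+F[-1]_m),
\]
uses the transposed version of the commutator relation $[F[k]_a,F[l]_b]=[(P_{ab}-Q_{ab}),F[k+l]_b]$ together with the argument of \cite[Lemma~3.2]{MM} to rewrite the result with $F^\top[-1]_a$ in place of $F[-1]_a$, and then observes that the simultaneous transposition $e_{ij}\mapsto e_{ji}$ on all auxiliary copies of $\operatorname{End}(\mathbb C^{2r+1})$ sends $F^\top[-1]_a$ back to $F[-1]_a$ while leaving $S^{(m)}$ invariant. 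This is precisely the computation you flag as ``the main obstacle''; the paper just carries it out for $\varpi$ itself, so your extra step (show $\widetilde t$-invariance, then argue $\widetilde t^{-1}\widetilde\varpi=\operatorname{Ad}(g)$ is inner and fixes the centralizer of $\mathfrak g_N$) buys nothing---checking $\widetilde t$-invariance would be the same calculation. Your inner-automorphism argument is correct in principle (types B and C have $w_0=-1$, so all automorphisms are inner), but it is simply not needed.

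A minor slip: you write $\mathfrak g_N\subset\mathfrak{gl}_N$, but for $N=2r$ one has $\mathfrak g_N=\mathfrak{so}_{2r+1}\subset\mathfrak{gl}_{2r+1}$, and for $N=2r+1$ one has $\mathfrak g_N=\mathfrak{sp}_{2r}\subset\mathfrak{gl}_{2r}$.
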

\begin{proof}We prove the proposition in Section~\ref{proof}.
\end{proof}

If $\bm z\in\mathbb R{\mathring{\mathbb P}}_n$, then $B_{ij}(\bm\lambda,\bm z)$ are self-adjoint with respect to the Shapovalov form. Therefore all $B_{ij}(\bm\lambda,\bm z)$ are simultaneously diagonalizable and all eigenvalues of $B_{ij}(\bm\lambda,\bm z)$ are real.

The following statement is also known.

\begin{Theorem}[\cite{R}]\label{lem BC completeness}
 For generic $\bm z\in {\mathring{\mathbb P}}_n$, the action of the Bethe algebra $\mathcal B$ on $(V_{\bm\lambda,\bm z})^{\mathfrak g_N}$ is diagonalizable and has simple spectrum. In particular, this statement holds for any sequence $\bm z\in \mathbb R{\mathring{\mathbb P}}_n$.
\end{Theorem}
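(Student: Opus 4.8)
The plan is to prove the ``real'' assertion and the ``generic'' assertion together, obtaining the latter from the former by a Zariski-density argument. Note first that $\mathfrak g_N$ acts on $V_{\bm\lambda,\bm z}=\bigotimes_{s=1}^n V_{\lambda^{(s)}}(z_s)$ through its diagonal copy of constants, independently of $\bm z$, so $W:=(V_{\bm\lambda,\bm z})^{\mathfrak g_N}$ is a single fixed finite-dimensional space, and only the $\mathcal B$-action varies with $\bm z$; in a fixed basis of $W$ the operators $B_{ij}(\bm\lambda,\bm z)$ have matrix entries that are rational functions of $\bm z$, regular on $\mathring{\mathbb P}_n$, since $g(x)$ acts by $\sum_s g^{(s)}/(x-z_s)$. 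Because each $B_s(x)$ acts on $W$ by a rational function of $x$, the image of $\mathcal B$ in $\operatorname{End}(W)$ is a commutative algebra generated by finitely many such matrices $b_1(\bm z),\dots,b_m(\bm z)$. Standard linear algebra then shows that $\mathcal B$ acts on $W$ diagonalizably with simple spectrum if and only if some linear combination $\sum_k c_k b_k(\bm z)$ has $\dim W$ pairwise distinct eigenvalues, i.e.\ if and only if the discriminant $\operatorname{disc}_c(\bm z)$ of the characteristic polynomial of $\sum_k c_k b_k(\bm z)$ does not vanish identically in $c$. The coefficients of $\operatorname{disc}_c(\bm z)$, viewed as a polynomial in $c$, are rational functions of $\bm z$, so
\begin{gather*}
S=\big\{\bm z\in\mathring{\mathbb P}_n \,:\, \mathcal B \text{ acts on } (V_{\bm\lambda,\bm z})^{\mathfrak g_N} \text{ diagonalizably with simple spectrum}\big\}
\end{gather*}
is the complement of the common zero locus of finitely many rational functions on $\mathring{\mathbb P}_n$, hence Zariski-open. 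It therefore suffices to prove $S\supseteq\mathbb R\mathring{\mathbb P}_n$: this is the ``in particular'' clause, and since $\mathbb R\mathring{\mathbb P}_n$ is Zariski-dense in $\mathring{\mathbb P}_n$ it also forces $S$ to be a dense open set, which is the ``generic'' clause.

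So fix $\bm z\in\mathbb R\mathring{\mathbb P}_n$. Then $\bar{\bm z}=\bm z$, so Proposition~\ref{sym bethe} shows that every $B_{ij}(\bm\lambda,\bm z)$ is self-adjoint for the positive definite Hermitian form $(\cdot|\cdot)_{\bm\lambda}$ on $W$; hence the whole of $\mathcal B$ acts on $W$ diagonalizably with real joint eigenvalues, as already observed in the remark preceding the theorem. The remaining point is that there are no coincidences among the joint eigenvalues. Under the standing assumption $|\bm\xi|=N(d-N)$, after reducing to the case $\bm k=(0,\dots,0)$ via the base-point isomorphism \eqref{mult} so that $|\bm\lambda_A|=N(d'-N)$, Theorem~\ref{bi rep sgr} puts the joint eigenvalues of $\mathcal B$ on $W$ in bijection with the set $\mathrm s\Omega_{\bm\lambda,\bm z}\subset\operatorname{Gr}(N,d')$, while $\dim W$ equals the number of points of $\mathrm s\Omega_{\bm\lambda,\bm z}$ counted with multiplicities. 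For real $\bm z$ the transversality result \cite[Corollary~6.3]{MTV4} guarantees that every point of $\mathrm s\Omega_{\bm\lambda,\bm z}$ is multiplicity-free, so the number of joint eigenvalues equals $\dim W$; combined with diagonalizability this is exactly simple spectrum, i.e.\ $\bm z\in S$.

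These two steps prove the theorem. The genuinely hard ingredient is the transversality/reducedness input \cite[Corollary~6.3]{MTV4}, which is the substance behind the cited result \cite{R}: it expresses the transversality of intersections of Schubert varieties at real osculating flags in the self-dual type B/C setting and rests on the theory of the Wronski map together with the reality theorems of \cite{MTV3}; I would invoke it as a black box rather than reprove it. Everything else is routine: the $\bm z$-independence of $W$ and the rationality in $\bm z$ of the $\mathcal B$-action, the elementary fact that a commuting family of operators is diagonalizable with simple spectrum precisely when one of its members has a full set of distinct eigenvalues, the self-adjointness provided by Proposition~\ref{sym bethe} for real $\bm z$, and the observation that simple spectrum is a Zariski-open condition that propagates from the Zariski-dense real locus to a generic $\bm z$.
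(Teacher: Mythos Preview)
The paper does not give its own proof of this theorem: it is simply cited from \cite{R} (Rybnikov's proof of the Gaudin Bethe ansatz conjecture), with no argument supplied. So there is nothing to compare line by line; what you have written is an independent proof sketch rather than a reconstruction.

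Your route is genuinely different from Rybnikov's. His argument works for arbitrary simple $\mathfrak g$ and does not pass through Schubert calculus at all: he compactifies the parameter space of $\bm z$, shows the family of Bethe subalgebras extends to the boundary, and proves cyclicity at special degenerate points (related to shift-of-argument subalgebras and regular nilpotents), then propagates cyclicity to all $\bm z$; simple spectrum and, in the real case, diagonalizability follow. Your argument instead stays inside this paper's framework: self-adjointness from Proposition~\ref{sym bethe} gives diagonalizability for $\bm z\in\mathbb R\mathring{\mathbb P}_n$, the bijection of Theorem~\ref{bi rep sgr} together with the transversality input \cite[Corollary~6.3]{MTV4} forces the number of joint eigenvalues to equal $\dim W$, hence simple spectrum; then a Zariski-openness/density argument exports the conclusion to generic complex $\bm z$. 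This is more concrete and ties nicely to the rest of the paper, at the cost of being specific to types B/C and relying on the heavy Schubert-calculus machinery. One remark: your sentence identifying \cite[Corollary~6.3]{MTV4} as ``the substance behind the cited result \cite{R}'' is not accurate---Rybnikov's methods are orthogonal to the Wronski map/transversality circle of ideas.

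The one point you should check before declaring the argument complete is the absence of circularity: you invoke Theorem~\ref{bi rep sgr} (the bijection from \cite{LMV} between joint $\mathcal B$-eigenvalues on $(V_{\bm\lambda,\bm z})^{\mathfrak g_N}$ and points of $\mathrm s\Omega_{\bm\lambda,\bm z}$) to deduce simple spectrum, so you need that bijection to be established in \cite{LMV} without already assuming the completeness statement you are proving. If that is granted, your argument is sound.
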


If some of the partitions $\lambda^{(1)},\dots,\lambda^{(n)}$ coincide, the operators $b(\bm\lambda,\bm z)$ admit additional symmetry. Assume that $\lambda^{(i)}=\lambda^{(i+1)}$ for some $i$. Let $P_i\in\operatorname{End}(V_{\bm\lambda})$ be the flip of the $i$-th and $(i+1)$-st tensor factors and $\tilde{{\bm z}}^{(i)}=(z_1,\dots,z_{i-1},z_{i+1},z_i,z_{i+2},\dots,z_n)$.

\begin{Lemma}\label{perm bethe} For any $b\in\mathcal B$, we have $P_i b(\bm\lambda,\bm z)P_i=b\big(\bm\lambda,\tilde{{\bm z}}^{(i)}\big)$.
\end{Lemma}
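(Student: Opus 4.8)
The plan is to trace how the operators $b(\bm\lambda,\bm z)$ are built and observe that the only way $\bm z$ enters is through the evaluation parameters attached to the tensor factors. Recall that $b\in\mathcal B$ is a polynomial in the coefficients of the generating series $F_{ij}(x)$, and on $V_{\bm\lambda,\bm z}=\bigotimes_{s=1}^n V_{\lambda^{(s)}}(z_s)$ the series $g(x)$ for $g\in\mathfrak g_N$ acts as $\sum_{s=1}^n g^{(s)}/(x-z_s)$, where $g^{(s)}$ denotes $g$ acting in the $s$-th tensor factor. Therefore $b(\bm\lambda,\bm z)$ is obtained from this sum of rational functions by the same fixed universal expression that defines $b$, and $b(\bm\lambda,\bm z)$ depends on the data only through the assignment $s\mapsto\big(\lambda^{(s)},z_s\big)$ of a weight and a point to each tensor slot.

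Next I would make precise what conjugating by $P_i$ does. Since $\lambda^{(i)}=\lambda^{(i+1)}$, the flip $P_i$ is a well-defined linear isomorphism of $V_{\bm\lambda}$ (the two factors being literally the same module), and $P_i^{-1}=P_i$. For any $g\in\mathfrak g_N$ one has $P_i\, g^{(i)}P_i = g^{(i+1)}$ and $P_i\, g^{(i+1)}P_i = g^{(i)}$, while $P_i$ commutes with $g^{(s)}$ for $s\ne i,i+1$. Consequently, conjugating the operator $\sum_{s} g^{(s)}/(x-z_s)$ by $P_i$ produces $\sum_{s} g^{(s)}/\big(x-\tilde z^{(i)}_s\big)$: the terms with $s\ne i,i+1$ are untouched, and the pair of terms $g^{(i)}/(x-z_i)+g^{(i+1)}/(x-z_{i+1})$ is carried to $g^{(i+1)}/(x-z_i)+g^{(i)}/(x-z_{i+1})=g^{(i)}/(x-z_{i+1})+g^{(i+1)}/(x-z_i)$, which is exactly the value of the same series with $z_i$ and $z_{i+1}$ interchanged, i.e.\ evaluated at $\tilde{\bm z}^{(i)}$.

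Finally I would conclude by functoriality of the construction. Conjugation by the invertible operator $P_i$ is an algebra automorphism of $\operatorname{End}(V_{\bm\lambda})$, so it commutes with the polynomial operations (sums, products, and extraction of coefficients of formal power series in $x^{-1}$) used to pass from the currents $F_{ij}(x)$ to an arbitrary $b\in\mathcal B$. Applying this automorphism to $b(\bm\lambda,\bm z)$ and using the previous paragraph on each current yields $P_i\,b(\bm\lambda,\bm z)\,P_i = b\big(\bm\lambda,\tilde{\bm z}^{(i)}\big)$, as claimed. The only mild subtlety — and the step I would write out with some care — is verifying that $P_i$ really does intertwine $g^{(i)}$ and $g^{(i+1)}$ for every $g\in\mathfrak g_N$ and commutes with the $\mathfrak g_N$-diagonal action, so that both sides genuinely act on the same space and the identity makes sense on $(V_{\bm\lambda,\bm z})^{\mathfrak g_N}$; everything else is a formal consequence of how $\mathcal B$ is generated.
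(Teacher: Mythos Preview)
The paper states this lemma without proof, so there is nothing to compare against; your argument is exactly the standard one and is correct. The only point worth noting is that, per Proposition~\ref{prop bethe BC}, the generators $B_s(x)$ are polynomials not just in the $F_{ij}(x)$ but also in their $x$-derivatives $d^kF_{ij}(x)/dx^k$; since $P_i$ is independent of $x$, conjugation by $P_i$ commutes with $\partial_x$ as well, so this causes no trouble and is implicitly covered by your ``extraction of coefficients'' remark.
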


\subsection{Self-adjoint operators with respect to indefinite Hermitian form}\label{lin alg sec}
In this section we recall the key lemma from linear algebra, see \cite{P}.

Given a finite-dimensional vector space $M$, a linear operator $\mathfrak T\in\operatorname{End}(M)$, and a number $\alpha\in\mathbb C$, let $M_{\mathfrak T}(\alpha)=\operatorname{ker} (\mathfrak T-\alpha)^{\dim M}$. When $M_{\mathfrak T}(\alpha)$ is not trivial, it is the subspace of generalized eigenvectors of $\mathfrak T$ with eigenvalue $\alpha$.

\begin{Lemma}[\cite{P}] \label{linalg2} Let $M$ be a complex finite-dimensional vector space with a nondegenerate Hermitian form of signature $\kappa$, and let ${\mathcal A}\subset\operatorname{End}(M)$ be a commutative subalgebra over $\mathbb R$, whose elements are self-adjoint operators. Let $R=\bigcap_{\mathfrak T\in\mathcal A}\bigoplus_{\alpha\in\mathbb R}M_{\mathfrak T}(\alpha)$. Then the restriction of the Hermitian form on $R$ is nondegenerate and has signature $\kappa$. In particular, $\dim R\geqslant| \kappa|$.
\end{Lemma}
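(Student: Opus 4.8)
The plan is to prove Lemma~\ref{linalg2} by analyzing how the Hermitian form interacts with the generalized eigenspace decomposition induced by the commuting self-adjoint family $\mathcal A$. First I would establish the single-operator case: for one self-adjoint $\mathfrak T\in\mathcal A$, the generalized eigenspaces $M_{\mathfrak T}(\alpha)$ and $M_{\mathfrak T}(\beta)$ are orthogonal with respect to the Hermitian form unless $\beta=\bar\alpha$. This follows from the standard computation: if $v\in M_{\mathfrak T}(\alpha)$ and $w\in M_{\mathfrak T}(\beta)$, then using $(\mathfrak Tv,w)=(v,\mathfrak Tw)$ and nilpotence of $(\mathfrak T-\alpha)$ on the first factor, $(\mathfrak T-\bar\beta)$ on the second, one shows $(v,w)=0$ when $\alpha\ne\bar\beta$. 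Consequently $M$ decomposes as an orthogonal direct sum of the ``real part'' $\bigoplus_{\alpha\in\mathbb R}M_{\mathfrak T}(\alpha)$ and pieces of the form $M_{\mathfrak T}(\alpha)\oplus M_{\mathfrak T}(\bar\alpha)$ for $\alpha\notin\mathbb R$; on each such non-real piece the form is a nondegenerate pairing between the two summands, hence has signature $0$. Therefore the restriction of the form to $R_{\mathfrak T}:=\bigoplus_{\alpha\in\mathbb R}M_{\mathfrak T}(\alpha)$ is nondegenerate and carries the full signature $\kappa$.

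Next I would iterate this over the (finitely generated, or at least finite-dimensional as a space of operators) algebra $\mathcal A$. Since the operators in $\mathcal A$ commute, they preserve each other's generalized eigenspaces, so $\mathcal A$ restricts to a commuting family of self-adjoint operators on $R_{\mathfrak T_1}$ for any $\mathfrak T_1\in\mathcal A$; applying the single-operator result again with a second operator $\mathfrak T_2$ cuts down to $R_{\mathfrak T_1}\cap R_{\mathfrak T_2}$ while preserving nondegeneracy and signature. Because $\operatorname{End}(M)$ is finite-dimensional, $\mathcal A$ is spanned by finitely many operators $\mathfrak T_1,\dots,\mathfrak T_m$, and $R=\bigcap_{i=1}^m R_{\mathfrak T_i}$ (one must check that requiring real generalized eigenvalues for a spanning set is equivalent to requiring it for all of $\mathcal A$ — this holds because on the common generalized eigenspace the $\mathfrak T_i$ act with a single eigenvalue each, so any real-linear combination has real eigenvalue iff each $\mathfrak T_i$ does). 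Finitely many applications of the single-operator reduction then give that the form on $R$ is nondegenerate with signature $\kappa$, and since a nondegenerate Hermitian form on a space of dimension $\dim R$ has signature of absolute value at most $\dim R$, we conclude $\dim R\geqslant|\kappa|$.

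The main obstacle is the bookkeeping in the induction step: one must verify carefully that after restricting to $R_{\mathfrak T_1}$ the form remains Hermitian and nondegenerate (clear), that $\mathcal A$ still acts by self-adjoint operators on it (clear, since orthogonal complements of $\mathcal A$-invariant nondegenerate subspaces are $\mathcal A$-invariant), and — the subtle point — that the set $R$ defined by intersecting real-generalized-eigenspace-sums over \emph{all} of $\mathcal A$ coincides with the intersection over a spanning set, so that the induction actually terminates. This requires the observation that $M$ decomposes into joint generalized eigenspaces $M_{\boldsymbol\alpha}$ indexed by algebra homomorphisms $\mathcal A\to\mathbb C$ (or, more elementarily, by tuples of eigenvalues of a spanning set that are mutually compatible), and that $M_{\boldsymbol\alpha}\subset R$ iff the corresponding homomorphism takes real values on a spanning set iff it takes real values on all of $\mathcal A$. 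Since this is cited from \cite{P}, I expect the proof in the paper to be brief and to either reproduce this argument compactly or simply refer to the cited source.
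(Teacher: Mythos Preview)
Your argument is correct and is the standard proof of this fact: orthogonality of generalized eigenspaces for non-conjugate eigenvalues, zero signature on the hyperbolic pairs $M_{\mathfrak T}(\alpha)\oplus M_{\mathfrak T}(\bar\alpha)$ with $\alpha\notin\mathbb R$, and induction over a finite spanning set of $\mathcal A$, with the check that $R$ agrees with the intersection over the spanning set via the joint generalized eigenspace decomposition.

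There is nothing to compare against in the paper itself. Lemma~\ref{linalg2} is not proved there; it is simply stated with attribution to Pontrjagin and then invoked in the proof of Theorem~\ref{sgn}. Your closing expectation was exactly right.
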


\subsection{Proof of Proposition~\ref{sym bethe}}\label{proof}
In this section, we give the proof of Proposition~\ref{sym bethe}. We follow the convention of \cite{MM}. We only introduce the necessary notation and refer the reader to \cite[Section~5]{M} and \cite[Section~3]{MM} for more detail.

\begin{proof}[Proof of Proposition \ref{sym bethe}] We prove it for the case $N=2r$ first.

Let $E_{ij}$ with $i,j=1,\dots,2r+1$ be the standard basis of~$\mathfrak{gl}_{2r+1}$. The Lie subalgebra of~$\mathfrak{gl}_{2r+1}$ generated by the elements $F_{ij}=E_{ij}-E_{2r+2-j,2r+2-i}$ is isomorphic to the Lie algebra \smash{$\mathfrak{so}_{2r+1}=\mathfrak g_N$}. With this isomorphism, the anti-involution $\varpi\colon \mathfrak g_N\to\mathfrak g_N$ is realized by taking transposition, $F_{ij}\mapsto F_{ji}$. To be consistent with the notation in \cite{MM}, we write $\mathfrak g$ for $\mathfrak g_N$. The number~$N$ in~\cite{MM} is~$2r+1$ in our notation.

 We write $F_{ij}[s]$ for $F_{ij}\otimes t^{s}$ in the loop algebra $\mathfrak g\big[t,t^{-1}\big]$. Consider the affine Lie algebra $\widehat{\mathfrak g}=\mathfrak g\big[t,t^{-1}\big]\oplus \mathbb C K$, which is the central extension of the loop algebra $\mathfrak g\big[t,t^{-1}\big]$, where the element $K$ is central in $\widehat{\mathfrak g}$ and
\begin{gather*}
 [g_1[k],g_2[l]]=[g_1,g_2][k+l]+k\delta_{k,-l}(g_1,g_2)K,\qquad g_1,g_2\in\mathfrak g,\qquad k,l\in \mathbb Z.
\end{gather*}
 Consider the extended affine Lie algebra $\widehat{\mathfrak g}\oplus \mathbb C\tau=\mathfrak g\big[t,t^{-1}\big]\oplus \mathbb C K\oplus \mathbb C\tau$, where $\tau$ satisfies
\begin{gather*}
 [\tau,F_{ij}[s]]=-sF_{ij}[s-1],\qquad [\tau,K]=0,\qquad s\in \mathbb Z.
\end{gather*}
Set $\mathcal U=\mathcal U(\widehat{\mathfrak g}\oplus \mathbb C\tau)$ and fix $m\in \{1,\dots,N\}$. Introduce the element $F[s]_a$ of the algebra $\big(\operatorname{End} \big(\mathbb C^{2r+1}\big)\big)^{\otimes m}\otimes \mathcal U$, see \cite[equation~(3.5)]{MM}, by
\begin{gather*}
 F[s]_a=\sum_{i,j=1}^{2r+1} 1^{\otimes (a-1)}\otimes e_{ij}\otimes 1^{\otimes (m-a)}\otimes F_{ij}[s],
\end{gather*}
 where $e_{ij}\in \operatorname{End}\big(\mathbb C^{2r+1}\big)$ denote the standard matrix units. The map $\varpi$ induces an anti-involution
\begin{gather*}
 \varpi\colon \ \mathcal U\big(t^{-1}\mathfrak g\big[t^{-1}\big]\big)\to \mathcal U\big(t^{-1}\mathfrak g\big[t^{-1}\big]\big),\qquad F_{ij}[s]\mapsto F_{ji}[s],\qquad s\in\mathbb Z_{\leqslant -1}.
\end{gather*}

 For $1\leqslant a < b \leqslant m$, consider the operators $P_{ab}$ and $Q_{ab}$ in $\big(\operatorname{End}\big(\mathbb C^{2r+1}\big)\big)^{\otimes m}$ defined as follows
\begin{gather*}
 P_{ab}=\sum_{i,j=1}^{2r+1}1^{\otimes (a-1)}\otimes e_{ij}\otimes 1^{\otimes (b-a-1)}\otimes e_{ji}\otimes 1^{\otimes (m-b)},\\
 Q_{ab}=\sum_{i,j=1}^{2r+1}1^{\otimes (a-1)}\otimes e_{ij}\otimes 1^{\otimes (b-a-1)}\otimes e_{2r+2-i,2r+2-j}\otimes 1^{\otimes (m-b)}.
\end{gather*}
 Set
\begin{gather*}
 S^{(m)}=\frac{1}{m!}\prod_{1\leqslant a<b\leqslant m}\left(1+\frac{P_{ab}}{b-a}-\frac{2Q_{ab}}{2r+2b-2a-1}\right),
\end{gather*}
where the product is taken in the lexicographic order on the pairs $(a,b)$. The element $S^{(m)}$ is the symmetrizer of the Brauer algebra acting on $\big(\mathbb C^{2r+1}\big)^{\otimes m}$. In particular, for any $1\leqslant a<b\leqslant m$, the operator $S^{(m)}$ satisfies
\begin{gather*}
 S^{(m)}Q_{ab}=Q_{ab}S^{(m)}=0,\qquad S^{(m)}P_{ab}=P_{ab}S^{(m)}=S^{(m)}.
\end{gather*}

Replacing $\tau$ with $\partial_x$ and $F_{ij}[-\ell-1]$ with $-\partial_x^{\ell}F_{ij}(x)/\ell!$, where $F_{ij}(x)$ is defined in \eqref{currents}, for $\ell\in \mathbb Z_{\geqslant 0}$, in the element
\begin{gather*}
 \frac{2r+m-1}{2r+2m-1}\operatorname{tr}S^{(m)}(\tau+F[-1]_1)\cdots(\tau+F[-1]_m),
\end{gather*}
see \cite[formula~(3.26)]{MM}, where the trace is taken on all $m$ copies of $\operatorname{End}\big(\mathbb C^{2r+1}\big)$, we get a~differential operator
\begin{gather*}
 \vartheta_{m0}(x)\partial_x^m+\vartheta_{m1}(x)\partial_x^{m-1}+\dots+\vartheta_{mm}(x),
\end{gather*}
where $\vartheta_{mi}(x)$ is a formal power series in $x^{-1}$ with coefficients in~$\mathcal U(\mathfrak g[t])$. The Bethe subalgebra~$\mathcal B$ of $\mathcal U(\mathfrak g[t])$ is generated by the coefficients of $\vartheta_{mi}(x)$, $m=1,\dots,N$, $i=0,\dots,m$, see~\cite[Section~5]{M}.

Therefore, to prove the proposition, it suffices to show that the element
\begin{gather}\label{diff-gen}
 \frac{2r+m-1}{2r+2m-1}\operatorname{tr}S^{(m)}(\tau+F[-1]_1)\cdots(\tau+F[-1]_m),
\end{gather}
 is stable under the anti-involution $\varpi$. Here $\varpi$ maps $\tau$ to $\tau$.

 Applying transposition on $a$-th and $b$-th components to the commutator relation
\begin{gather*}
 F[k]_aF[l]_b-F[l]_bF[k]_a=(P_{ab}-Q_{ab})F[k+l]_b-F[k+l]_b(P_{ab}-Q_{ab}),
\end{gather*}
 see the proof of \cite[Lemma 3.6]{MM}, we get
\begin{gather*}
 F^{\top}[k]_aF^{\top}[l]_b-F^{\top}[l]_bF^{\top}[k]_a=F^{\top}[k+l]_b(P_{ab}-Q_{ab})-(P_{ab}-Q_{ab})F^{\top}[k+l]_b,
\end{gather*}for all $1\leqslant a<b\leqslant m$. Here $\top$ stands for transpose, explicitly,
\begin{gather*}
 F^\top[s]_a=\sum_{i,j=1}^{2r+1} 1^{\otimes (a-1)}\otimes e_{ij}\otimes 1^{\otimes (m-a)}\otimes F_{ji}[s].
\end{gather*}
 Thus one can use the same argument as in the proof of \cite[Lemma 3.2]{MM} to show that the image of \eqref{diff-gen} under the anti-involution $\varpi$ equals
\begin{gather}\label{diff-oper}
 \frac{2r+m-1}{2r+2m-1}\operatorname{tr}S^{(m)}\big(\tau+F^{\top}[-1]_1\big)\cdots\big(\tau+F^{\top}[-1]_m\big).
\end{gather}
 By applying the simultaneous transposition $e_{ij}\to e_{ji}$ to all $m$ copies of $\operatorname{End}\big(\mathbb C^{2r+1}\big)$ we conclude that~\eqref{diff-oper} coincides with~\eqref{diff-gen} because the transformation takes each factor $\tau+F^{\top}[-1]_a$ to $\tau+F[-1]_a$ whereas the symmetrizer~$S^{(m)}$ stays invariant. Hence we complete the proof of Proposition~\ref{sym bethe} for the case $N=2r$.

 The case $N=2r+1$ is proved similarly, see for example \cite[Lemma~3.9]{MM}.
\end{proof}

\section{The lower bound}\label{bound sec}
In this section we prove our main results -- the lower bound for the number of real self-dual spaces in $\Omega_{\bm\lambda,\bm z}$, see Theorem~\ref{sgn} and Corollary~\ref{bound thm}.

Recall the notation from Section \ref{schubert sec}. For positive integers $N$, $d$ such that $d\geqslant N$ we consider the Grassmannian $\operatorname{Gr}(N,d)$ of $N$-dimensional planes in the space~$\mathbb C_d[x]$ of polynomials of degree less than~$d$. A point $X\in\operatorname{Gr}(N,d)$ is called real if it has a basis consisting of polynomials with all coefficients real.

\subsection[The general case $N\geqslant 4$]{The general case $\boldsymbol{N\geqslant 4}$}\label{sec gen}
Let us first consider the case $N\geqslant 4$.

Let $\bm\lambda=\big(\lambda^{(1)},\dots,\lambda^{(n)}\big)$ be a sequence of dominant integral $\mathfrak g_N$-weights, $\bm k=(k_1,\dots,k_n)$ an $n$-tuple of nonnegative integers, and $\bm z=(z_1,\dots,z_n)\in\mathring{\mathbb P}_n$. Suppose that $|\bm\lambda_{A,\bm k}|=N(d-N)$. Denote by $d(\bm\lambda,\bm k,\bm z)$ the number of real points counted with multiplicities in $\mathrm s\Omega_{\bm\lambda,\bm k,\bm z}\subset\operatorname{Gr}(N,d)$.

Clearly, $d(\bm\lambda,\bm k,\bm z)=0$ unless the set $\{z_1,\dots,z_n\}$ is invariant under the complex conjugation and $\big(\lambda^{(i)},k_i\big) =\big(\lambda^{(j)},k_j\big)$ whenever $z_i=\bar z_j$. In particular, the polynomial $\prod\limits_{s=1}^n(x-z_s)^{k_s}$ has only real coefficients. It follows from \eqref{mult} that the number of real points in $\mathrm s\Omega_{\bm\lambda,\bm k,\bm z}\subset\operatorname{Gr}(N,d)$ is equal to that of $\mathrm s\Omega_{\bm\lambda,\bm z}\subset\operatorname{Gr}(N,d-|\bm k|)$. From now on, we shall only consider the case that $\bm k=(0,\dots,0)$. We simply write $d(\bm\lambda,\bm z)$ for $d(\bm\lambda,\bm k,\bm z)$ if $\bm k=(0,\dots,0)$.

Let $\bm T=(T_1,\dots,T_N)$ be associated with $\bm\lambda_{A,\bm k}$, $\bm z$. Note that if $\bm z$, $\bm\lambda$, $\bm k$ is invariant under conjugation, then the polynomial $T_1\cdots T_N$ also has only real coefficients.

In what follows we denote by $c$ the number of complex conjugate pairs in the set $\{z_1,\dots,z_n\}$ and without loss of generality assume that $z_1=\bar z_2,\dots,z_{2c-1}= \bar z_{2c}$ while $z_{2c+1},\dots,z_n$ are real (one of them can be infinity). We will also always assume that $\lambda^{(1)}=\lambda^{(2)},\dots,\lambda^{(2c-1)} =\lambda^{(2c)}$.

Recall that for any $\bm\lambda$ and generic $\bm z\in\mathring{\mathbb P}_n$, all points of $\Omega_{\bm\lambda,\bm z}$ are multiplicity-free. The same also holds true with $\bm\lambda$ imposed above for any $c$.

Consider the decomposition of $V_{\bm\lambda}$ into irreducible $\mathfrak g_N$-submodules
\begin{gather*}
V_{\bm\lambda}=\bigoplus_\mu V_\mu\otimes M_{\bm\lambda, \mu}.
\end{gather*}
Then $M_{\bm\lambda,0}= (V_{\bm\lambda})^{\mathfrak g_N}$. Since $\lambda^{(2i-1)}=\lambda^{(2i)}$ for $i=1,\dots,c$, the flip $P_{2i-1}$ of the $(2i-1)$-st and $2i$-th tensor factors of $V_{\bm\lambda}$ commutes with the $\mathfrak g_N$-action and thus acts on~$(V_{\bm\lambda})^{\mathfrak g_N}$. Denote by $P_{\bm\lambda,c}\in\operatorname{End}((V_{\bm\lambda})^{\mathfrak g_N})$ the action of the product $P_1P_3\cdots P_{2c-1}$ on $(V_{\bm\lambda})^{\mathfrak g_N}$.

The operator $P_{\bm\lambda,c}$ is self-adjoint relative to the Hermitian form $(\cdot|\cdot)_{\bm\lambda}$ on $(V_{\bm\lambda})^{\mathfrak g_N}$ given in Section~\ref{shap sec}. Define a new Hermitian form $(\cdot,\cdot)_{\bm\lambda,c}$ on $(V_{\bm\lambda})^{\mathfrak g_N}$ by the rule: for any $v,w\in(V_{\bm\lambda})^{\mathfrak g_N}$
\begin{gather*}
(v,w)_{\bm\lambda,c} = (P_{\bm\lambda,c} v|w)_{\bm\lambda} .
\end{gather*}
Denote by $q(\bm\lambda,c)$ the signature of the form $(\cdot,\cdot)_{\bm\lambda, c}$.

\begin{Proposition} \label{charsign}The signature $q(\bm\lambda,c)$ equals the coefficient of the monomial
\begin{gather*}
 x_1^{N-1}x_2^{N-3}\cdots x_r^{N+1-2r},
\end{gather*}
in the Laurent polynomial
\begin{gather*}
 \Delta_N\cdot\prod_{i=1}^c \mathcal S_{\lambda^{(2i)}}^N\big(x_1^2,\dots,x_r^2\big) \prod_{j=2c+1}^n \mathcal S_{\lambda^{(j)}}^N(x_1,\dots,x_r) .
\end{gather*}
Here $\Delta_N$ and $\mathcal S_{\lambda^{(s)}}^N$ are given by \eqref{eq:delta} and \eqref{eq:schur}, respectively.
\end{Proposition}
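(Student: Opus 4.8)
The plan is to compute the signature $q(\bm\lambda,c)$ as a trace of the operator $P_{\bm\lambda,c}$ on $(V_{\bm\lambda})^{\mathfrak g_N}$, and then recognize that trace via the character formula of Proposition~\ref{char prop}. The starting point is the observation that, since $P_{\bm\lambda,c}$ is self-adjoint with respect to the positive definite form $(\cdot|\cdot)_{\bm\lambda}$ and satisfies $P_{\bm\lambda,c}^2=\mathrm{id}$, its eigenvalues are $\pm 1$, the form $(\cdot|\cdot)_{\bm\lambda}$ is positive definite on each eigenspace, and hence the signature of $(v,w)_{\bm\lambda,c}=(P_{\bm\lambda,c}v|w)_{\bm\lambda}$ equals $\dim(+1\text{-eigenspace})-\dim(-1\text{-eigenspace})=\operatorname{tr}_{(V_{\bm\lambda})^{\mathfrak g_N}}P_{\bm\lambda,c}$. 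So the whole problem reduces to computing this trace.

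Next I would identify $(V_{\bm\lambda})^{\mathfrak g_N}=M_{\bm\lambda,0}$ in the language of Section~\ref{char sec}. Here the sequence of weights has $\lambda^{(1)}=\lambda^{(2)},\dots,\lambda^{(2c-1)}=\lambda^{(2c)}$ and the remaining $\lambda^{(2c+1)},\dots,\lambda^{(n)}$ distinct (generically), and $P_{\bm\lambda,c}=P_1P_3\cdots P_{2c-1}$ is exactly the action on $M_{\bm\lambda,0}$ of the element $\sigma\in\mathfrak S_{\bm k}$ which, for each of the $c$ repeated pairs, is the transposition swapping the two equal factors, and is the identity on the singleton factors. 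In the notation of Proposition~\ref{char prop} this $\sigma$ has, for each repeated pair, one cycle of length $2$ (so $\mathcal S^N_{\lambda^{(2i)}}(x_1^2,\dots,x_r^2)$ appears), and for each real point a single fixed point contributing $\mathcal S^N_{\lambda^{(j)}}(x_1,\dots,x_r)$. Thus $\operatorname{tr}_{M_{\bm\lambda,0}}P_{\bm\lambda,c}=\chi_{\bm\lambda,0}(\sigma)$ equals, by Proposition~\ref{char prop} with $\mu=0$ (so $\bar\mu=(0,\dots,0)$ and the monomial is $x_1^{N-1}x_2^{N-3}\cdots x_r^{N+1-2r}$), the coefficient of that monomial in
\begin{gather*}
\Delta_N\cdot\prod_{i=1}^c \mathcal S_{\lambda^{(2i)}}^N\big(x_1^2,\dots,x_r^2\big)\prod_{j=2c+1}^n \mathcal S_{\lambda^{(j)}}^N(x_1,\dots,x_r),
\end{gather*}
which is precisely the claimed expression.

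The only real point that needs care — and the step I expect to be the main obstacle to write cleanly — is justifying that Proposition~\ref{char prop} applies verbatim here, i.e.\ that the $\mathfrak S_{\bm k}$-module structure and the combinatorics of cycle types are genuinely those of Proposition~\ref{char prop} even when some of the ``multiplicities'' $k_i$ equal $1$ (the real points), and that identifying $P_{\bm\lambda,c}$ with permutation of tensor factors is compatible with the Shapovalov form so that the self-adjointness claim used above is legitimate. Self-adjointness of each $P_i$ is immediate from the fact that the Shapovalov form on a tensor product is a product of the factor forms and the two swapped factors carry the same form; commuting $P_{\bm\lambda,c}$ with the $\mathfrak g_N$-action is the content of Lemma~\ref{perm bethe}-type reasoning restricted to the permutation group. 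Once these bookkeeping points are in place, the proof is simply: signature $=$ trace $=$ character value $=$ the stated coefficient. I would close by remarking that the genericity hypothesis on $\bm z$ (multiplicity-freeness) is not actually needed for this proposition, which is a purely representation-theoretic identity; it only enters when $q(\bm\lambda,c)$ is later used as a lower bound for the number of real points.
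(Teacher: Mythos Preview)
Your proposal is correct and follows essentially the same approach as the paper: the paper's proof is one sentence, observing that $P_{\bm\lambda,c}^2=1$ gives $q(\bm\lambda,c)=\operatorname{tr}_{M_{\bm\lambda,0}}P_{\bm\lambda,c}$ and then invoking Proposition~\ref{char prop}. Your version simply unpacks the same two steps with more care (self-adjointness, the cycle-type matching with $\mu=0$), and your bookkeeping concerns are harmless --- Proposition~\ref{char prop} handles factors with $k_i=1$ trivially, and the self-adjointness of $P_i$ is exactly as you say.
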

\begin{proof}Since $P_{\bm\lambda,c}^2 =1$ and $M_{\bm\lambda,0}= (V_{\bm\lambda})^{\mathfrak g_N}$, we have $q(\bm\lambda,c)=\operatorname{tr}^{\vphantom1}_{M_{\bm\lambda,0}} P_{\bm\lambda,c}$, and the claim follows from Prop\-osition~\ref{char prop}.
\end{proof}

\begin{Theorem} \label{sgn}
 The number $d(\bm\lambda,\bm z)$ of real self-dual spaces in $\Omega_{\bm\lambda,\bm z}$ is no less than $|q(\bm\lambda,c)|$.
\end{Theorem}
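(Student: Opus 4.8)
The plan is to work on the finite-dimensional space $M:=(V_{\bm\lambda,\bm z})^{\mathfrak g_N}$ equipped with the Hermitian form $(\cdot,\cdot)_{\bm\lambda,c}=(P_{\bm\lambda,c}\,\cdot\,|\,\cdot\,)_{\bm\lambda}$, which is nondegenerate (since $P_{\bm\lambda,c}$ is an involution and $(\cdot|\cdot)_{\bm\lambda}$ is positive definite) and has signature $q(\bm\lambda,c)$ by definition. I will show that the commuting operators $B_{ij}(\bm\lambda,\bm z)$ are all self-adjoint with respect to $(\cdot,\cdot)_{\bm\lambda,c}$, then apply Lemma~\ref{linalg2} to extract a subspace of joint generalized eigenvectors of $\mathcal B$ with all eigenvalues real whose dimension is at least $|q(\bm\lambda,c)|$, and finally use Theorem~\ref{bi rep sgr} to convert this into a lower bound for $d(\bm\lambda,\bm z)$.

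First I would identify the permutation realized by the flip $P_{\bm\lambda,c}=P_1P_3\cdots P_{2c-1}$: these are disjoint transpositions of tensor factors, and under our standing assumptions $\lambda^{(2i-1)}=\lambda^{(2i)}$, $z_{2i}=\bar z_{2i-1}$ for $i=1,\dots,c$, and $z_{2c+1},\dots,z_n$ real, applying $P_{\bm\lambda,c}$ sends the ordered tuple $\bm z$ precisely to $\bar{\bm z}$. Hence iterating Lemma~\ref{perm bethe} gives $P_{\bm\lambda,c}\,B_{ij}(\bm\lambda,\bm z)\,P_{\bm\lambda,c}=B_{ij}(\bm\lambda,\bar{\bm z})$ on $M$. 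Combining this with Proposition~\ref{sym bethe} (applied with parameter $\bar{\bm z}$, so that $\overline{\bar{\bm z}}=\bm z$) and with the self-adjointness of $P_{\bm\lambda,c}$ for $(\cdot|\cdot)_{\bm\lambda}$, I would compute, for $v,w\in M$,
\begin{gather*}
\big(B_{ij}(\bm\lambda,\bm z)v,w\big)_{\bm\lambda,c}=\big(P_{\bm\lambda,c}B_{ij}(\bm\lambda,\bm z)v|w\big)_{\bm\lambda}=\big(B_{ij}(\bm\lambda,\bar{\bm z})P_{\bm\lambda,c}v|w\big)_{\bm\lambda}\\
=\big(P_{\bm\lambda,c}v|B_{ij}(\bm\lambda,\bm z)w\big)_{\bm\lambda}=\big(v,B_{ij}(\bm\lambda,\bm z)w\big)_{\bm\lambda,c},
\end{gather*}
which shows that every $B_{ij}(\bm\lambda,\bm z)$ is self-adjoint for $(\cdot,\cdot)_{\bm\lambda,c}$.

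Next let $\mathcal A\subset\operatorname{End}(M)$ be the $\mathbb R$-subalgebra generated by all the $B_{ij}(\bm\lambda,\bm z)$. It is commutative because the Bethe algebra $\mathcal B$ is, and each of its elements is self-adjoint for $(\cdot,\cdot)_{\bm\lambda,c}$ since real linear combinations and products of commuting self-adjoint operators remain self-adjoint. Lemma~\ref{linalg2} then gives that $R:=\bigcap_{\mathfrak T\in\mathcal A}\bigoplus_{\alpha\in\mathbb R}M_{\mathfrak T}(\alpha)$ has $\dim R\geqslant|q(\bm\lambda,c)|$. Because $\mathcal A$ is generated over $\mathbb R$ by the $B_{ij}(\bm\lambda,\bm z)$ and $M$ decomposes as a direct sum of joint generalized eigenspaces for $\mathcal B$, the space $R$ is exactly the sum of those generalized eigenspaces on which all $B_{ij}(\bm\lambda,\bm z)$ act with real eigenvalue.

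Finally I would push this through Theorem~\ref{bi rep sgr}. For a joint eigenvalue at which every $B_{ij}(\bm\lambda,\bm z;v)$ is real, the scalar operator $\mathcal D_v$ has real coefficients, and since $T_1\cdots T_N$ is a real polynomial, the conjugated operator $(T_1\cdots T_N)^{1/2}\mathcal D_v(T_1\cdots T_N)^{-1/2}$ has real coefficients as well; therefore the corresponding point of $\mathrm s\Omega_{\bm\lambda,\bm z}$ is a real self-dual space, and by the bijection of Theorem~\ref{bi rep sgr} its multiplicity equals the dimension of that generalized eigenspace. Summing over the eigenvalues contributing to $R$ yields $d(\bm\lambda,\bm z)\geqslant\dim R\geqslant|q(\bm\lambda,c)|$. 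I expect the main points requiring care to be the bookkeeping identity $P_{\bm\lambda,c}B_{ij}(\bm\lambda,\bm z)P_{\bm\lambda,c}=B_{ij}(\bm\lambda,\bar{\bm z})$ — i.e.\ verifying that the permutation implemented by $P_{\bm\lambda,c}$ coincides with complex conjugation of the tuple $\bm z$ — and the final translation step, namely that an all-real joint eigenvalue yields a real self-dual space counted with the matching multiplicity.
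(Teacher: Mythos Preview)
Your argument tracks the paper's proof through the two main steps: you establish self-adjointness of the $B_{ij}(\bm\lambda,\bm z)$ for $(\cdot,\cdot)_{\bm\lambda,c}$ via Proposition~\ref{sym bethe} and Lemma~\ref{perm bethe} (with a more explicit computation than the paper gives), and you apply Lemma~\ref{linalg2} exactly as the paper does.

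Where you diverge is the endgame. The paper invokes Theorem~\ref{lem BC completeness} (Rybnikov's simple-spectrum result) to pass to generic $\bm z$, where the joint spectrum is simple and each real joint eigenvalue gives a distinct real point of $\mathrm s\Omega_{\bm\lambda,\bm z}$; the bound for arbitrary $\bm z$ then follows by semicontinuity of $d(\bm\lambda,\bm z)$ under degeneration (``counting with multiplicities''). You instead argue directly at the given $\bm z$: a real joint eigenvalue produces a real operator $(T_1\cdots T_N)^{1/2}\mathcal D_v(T_1\cdots T_N)^{-1/2}$ and hence a real point, and you want to add up generalized eigenspace dimensions. This is cleaner in that it avoids Rybnikov's theorem altogether, but it leans on something stronger than Theorem~\ref{bi rep sgr} as stated: that theorem only asserts a \emph{set} bijection between joint eigenvalues and points of $\mathrm s\Omega_{\bm\lambda,\bm z}$, not that the scheme-theoretic multiplicity of a point equals the dimension of the corresponding generalized eigenspace. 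That stronger statement is true (it is essentially the content of the cited \cite{LMV}), but you cannot read it off Theorem~\ref{bi rep sgr} alone, which is precisely why the paper routes through genericity instead. You correctly flag this translation step as delicate; to make your version airtight you would need to cite the scheme-theoretic isomorphism from \cite{LMV} rather than just the bijection.
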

\begin{proof}
 Our proof is parallel to that of \cite[Theorem 7.2]{MT}.

By Proposition \ref{sym bethe} and Lemma \ref{perm bethe}, the operators $B_{ij}(\bm\lambda,\bm z)\in\operatorname{End}((V_{\bm{\lambda}})^{\mathfrak g_N})$ are self-adjoint relative to the form $(\cdot,\cdot)_{\bm\lambda,c}$. By Lemma \ref{linalg2},
\begin{gather*}
 \dim\bigg( \bigcap_{i,j} \bigoplus_{\alpha\in\mathbb R} \big((V_{\bm\lambda})^{\mathfrak g_N}\big)_{B_{ij}(\bm\lambda,\bm z)}(\alpha)\bigg)\geqslant|q(\bm\lambda,c)| .
\end{gather*}
By Theorem \ref{lem BC completeness}, for any $\bm\lambda$ and generic $\bm z\in\mathring{\mathbb P}_n$ the operators $B_{ij}(\bm\lambda,\bm z)$ are diagonalizable and the action of the Bethe algebra $\mathcal B$ on $(V_{\bm\lambda})^{\mathfrak g_N}$ has simple spectrum. The same also holds true with~$\bm\lambda$ imposed above for any~$c$. Thus for generic~$\bm z$, the operators $B_{ij}(\bm\lambda,\bm z)$ have at least $|q(\bm\lambda,c)|$ common eigenvectors with distinct real eigenvalues, which provides $|q(\bm\lambda,c)|$ distinct real points in $\mathrm s\Omega_{\bm\lambda,\bm z}$ by Theorem~\ref{bi rep sgr}. Hence, $d(\bm\lambda,\bm z)\geqslant|q(\bm\lambda,c)|$ for generic $\bm z$, and therefore, for any $\bm z$, due to counting with multiplicities.
\end{proof}

\begin{Remark}\label{rem par}If $\dim (V_{\bm\lambda})^{\mathfrak g_N}$ is odd, it follows from Theorem \ref{sgn} by counting parity that
\begin{gather*}
 d(\bm\lambda,\bm z)\geqslant |q(\bm\lambda,c)|\geqslant 1.
\end{gather*}
Therefore, there exists at least one real point in $\mathrm s\Omega_{\bm\lambda,\bm z}$. In particular, if $\dim (V_{\bm\lambda})^{\mathfrak g_N}=1$, then the only point in $\mathrm s\Omega_{\bm\lambda,\bm z}$ is always real.
\end{Remark}

The following corollary of Proposition \ref{charsign} and Theorem \ref{sgn} is our main result.
\begin{Corollary} \label{bound thm}
 The number $d(\bm\lambda,\bm z)$ of real self-dual spaces in $\Omega_{\bm\lambda,\bm z}$ $($real points in $\mathrm s\Omega_{\bm\lambda,\bm z})$ is no less than $|a(\bm\lambda,c)|$, where $a(\bm\lambda, c)$ is the coefficient of the monomial $x_1^{N-1}x_2^{N-3}\cdots x_r^{N+1-2r}$
 in the Laurent polynomial
\begin{gather*}
 \Delta_N\cdot\prod_{i=1}^c \mathcal S_{\lambda^{(2i)}}^N\big(x_1^2,\dots,x_r^2\big)
 \prod_{j=2c+1}^n \mathcal S_{\lambda^{(j)}}^N(x_1,\dots,x_r) .
\end{gather*}
 Here $\Delta_N$ is the Vandermonde determinant of $\mathfrak g_N$ and $\mathcal S_{\lambda^{(s)}}^N$ is the Schur function of $\mathfrak g_N$ associated with $\lambda^{(s)}$, $s=1,\dots, n$, see~\eqref{eq:delta} and~\eqref{eq:schur}.
\end{Corollary}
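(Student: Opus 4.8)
The plan is to read the assertion straight off Proposition~\ref{charsign} and Theorem~\ref{sgn}; the corollary is just their combination. First I would note that the integer $a(\bm\lambda,c)$ is by its very definition the coefficient of $x_1^{N-1}x_2^{N-3}\cdots x_r^{N+1-2r}$ in $\Delta_N\cdot\prod_{i=1}^c \mathcal S_{\lambda^{(2i)}}^N(x_1^2,\dots,x_r^2)\prod_{j=2c+1}^n \mathcal S_{\lambda^{(j)}}^N(x_1,\dots,x_r)$, and this is exactly the quantity that Proposition~\ref{charsign} identifies with the signature $q(\bm\lambda,c)$ of the Hermitian form $(\cdot,\cdot)_{\bm\lambda,c}$ on $(V_{\bm\lambda})^{\mathfrak g_N}$. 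Thus $a(\bm\lambda,c)=q(\bm\lambda,c)$.

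Next I would apply Theorem~\ref{sgn}, which gives $d(\bm\lambda,\bm z)\geqslant|q(\bm\lambda,c)|$, and substitute the previous identification to obtain $d(\bm\lambda,\bm z)\geqslant|a(\bm\lambda,c)|$. This is the claim, since ``real self-dual spaces in $\Omega_{\bm\lambda,\bm z}$'' and ``real points in $\mathrm s\Omega_{\bm\lambda,\bm z}$'' mean the same thing by the definition $\mathrm s\Omega_{\bm\lambda,\bm z}=\Omega_{\bm\lambda,\bm z}\cap\operatorname{sGr}(N,d)$, and the names $\Delta_N$ (Vandermonde determinant of $\mathfrak g_N$) and $\mathcal S_{\lambda^{(s)}}^N$ (Schur function of $\mathfrak g_N$) match \eqref{eq:delta} and \eqref{eq:schur} verbatim.

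Since the corollary is a direct corollary, there is no real obstacle: all the work has been done upstream, in the Frobenius-type character formula of Proposition~\ref{char prop} (proved as in \cite{MT}), in the reinterpretation $q(\bm\lambda,c)=\operatorname{tr}_{M_{\bm\lambda,0}}P_{\bm\lambda,c}$ of Proposition~\ref{charsign} that uses $P_{\bm\lambda,c}^2=1$ and $M_{\bm\lambda,0}=(V_{\bm\lambda})^{\mathfrak g_N}$, and in Theorem~\ref{sgn}, which feeds the self-adjointness of $B_{ij}(\bm\lambda,\bm z)$ relative to $(\cdot,\cdot)_{\bm\lambda,c}$ (from Proposition~\ref{sym bethe} and Lemma~\ref{perm bethe}) into Lemma~\ref{linalg2}, then uses completeness at generic $\bm z$ (Theorem~\ref{lem BC completeness}) and the eigenvalue/self-dual-space bijection (Theorem~\ref{bi rep sgr}), specializing back to arbitrary $\bm z$ by counting multiplicities. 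The only thing that merits a second glance is the bookkeeping -- checking that the target monomial in Proposition~\ref{charsign} is indeed the $\mu=0$ specialization of the monomial in Proposition~\ref{char prop} (so $\bar\mu=(0,\dots,0)$ and the exponents become $N-1,N-3,\dots,N+1-2r$) and that the product of Schur functions, with the squared arguments on the conjugate pairs, is transcribed without a slip.
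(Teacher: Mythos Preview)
Your proposal is correct and matches the paper's own treatment: the corollary is stated immediately after the sentence ``The following corollary of Proposition~\ref{charsign} and Theorem~\ref{sgn} is our main result'' and carries no separate proof, so the intended argument is exactly the identification $a(\bm\lambda,c)=q(\bm\lambda,c)$ from Proposition~\ref{charsign} followed by the bound $d(\bm\lambda,\bm z)\geqslant|q(\bm\lambda,c)|$ from Theorem~\ref{sgn}.
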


\begin{Remark}
 Recall that the total number of points (counted with multiplicities) in $\mathrm s\Omega_{\bm\lambda,\bm z}$ equals
 $\dim(V_{\bm\lambda})^{\mathfrak g_N}=q(\bm\lambda,0)$. Hence if $\bm z\in\mathbb R\mathring{\mathbb P}_n$, Theorem \ref{sgn} claims that all points in
 $\mathrm s\Omega_{\bm\lambda,\bm z}$ are real. It is proved in \cite[Corollary 6.3]{MTV4} that for
 $\bm z\in\mathbb R\mathring{\mathbb P}_n$ all points in $\Omega_{\bm\lambda,\bm z}$ are real and
 multiplicity-free, so are the points in $\mathrm s\Omega_{\bm\lambda,\bm z}$.
\end{Remark}

\subsection[The case $N=2,3$]{The case $\boldsymbol{N=2,3}$}\label{sec N=2,3}
Now let us consider the case $N=2, 3$. Note that $\operatorname{sGr}(2,d)=\operatorname{Gr}(2,d)$, this case is the usual Grassmannian, which has already been discussed in \cite{MT}.

Let $N=3$ and $\mathfrak g_3=\mathfrak{sl}_2$. It suffices for us to consider the case that points in $\mathrm s\Omega_{\bm\lambda,\bm z}$ have no base points, see the beginning of Section \ref{sec gen} for more detail. We shall consider $\operatorname{sGr}(3,2d-1)$ instead of $\operatorname{sGr}(3,d)$, see \cite[Section~4.6]{LMV}. We identify the dominant integral $\mathfrak{sl}_2$-weights with nonnegative integers. Let $\bm\lambda=\big(\lambda^{(1)},\dots,\lambda^{(n)}\big)$ be a sequence of nonnegative integers and $\bm z=(z_1,\dots,z_n)\in {\mathring{\mathbb P}}_{n}$. Then $\bm\lambda_{A}$ has coordinates
\begin{gather*}
\lambda_{A}^{(s)}=\big(2\lambda^{(s)},\lambda^{(s)},0\big),\qquad s=1,\dots,n.
\end{gather*}
We also assume $|\bm\lambda_A|=6(d-2)$.

Recall from \cite[Theorem 4.19]{LMV}, if $X\in \mathrm s\Omega_{\bm\lambda,\bm z}$, then there exist monic polynomials $\varphi$ and $\psi$ such that $\varphi^2$, $\varphi\psi$, $\psi^2$ form a basis of $X$. Denote by $\sqrt{X}$ the space of polynomials spanned by $\varphi$ and $\psi$. Let $\xi^{(i)}$ be the partitions with at most two parts defined by $\big(\lambda^{(i)},0\big)$, $i=1,\dots,n$. Set $\bm \xi=\big(\xi^{(1)},\dots,\xi^{(n)}\big)$, then $|\bm \xi|=2(d-2)$. It follows from the proof of \cite[Theorem~4.19]{LMV} that $\sqrt{X}\in \Omega_{\bm \xi,\bm z}\subset \operatorname{Gr}(2,d)$. The map $\Omega_{\bm \xi,\bm z}\to \mathrm s\Omega_{\bm\lambda,\bm z}$ given by $\sqrt{X}\mapsto X$ is bijective.

\begin{Lemma}\label{red usual grass}The self-dual space $X$ is real if and only if $\sqrt{X}$ is real.
\end{Lemma}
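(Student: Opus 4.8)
The plan is to unwind the correspondence $X \leftrightarrow \sqrt{X}$ on the level of coefficients. Recall that for $X \in \mathrm{s}\Omega_{\bm\lambda,\bm z}$ there are monic polynomials $\varphi,\psi$ with $\{\varphi^2,\varphi\psi,\psi^2\}$ a basis of $X$, and $\sqrt{X}=\operatorname{span}\{\varphi,\psi\}$. First I would observe that this pair $(\varphi,\psi)$ is essentially canonical: the three polynomials $\varphi^2,\varphi\psi,\psi^2$ are the unique (up to the natural $\mathrm{SL}_2$-ambiguity) factorization of a symmetric-square Wronski-type datum, so the $2$-dimensional space $\sqrt{X}$ is intrinsically attached to $X$. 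Concretely, one can recover $\sqrt{X}$ from $X$ by extracting square roots of the "extreme" elements of $X$ after fixing the flag conditions; the point is that the recipe $X \mapsto \sqrt{X}$ commutes with any field automorphism of $\mathbb{C}$ fixing the data, in particular with complex conjugation.

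The forward implication is the routine direction: if $\sqrt{X}$ has a real basis, pick real monic $\varphi,\psi$ spanning it; then $\varphi^2,\varphi\psi,\psi^2$ have real coefficients and span $X$, so $X$ is real. For the converse, suppose $X$ is real, i.e.\ $\bar X = X$ where the bar denotes coefficientwise conjugation. Applying conjugation to the basis $\varphi^2,\varphi\psi,\psi^2$ shows that $\bar\varphi^2,\bar\varphi\bar\psi,\bar\psi^2$ is also a basis of $X$ (since $\bar X = X$), hence $\overline{\sqrt{X}} = \operatorname{span}\{\bar\varphi,\bar\psi\}$ is again a $2$-dimensional space whose symmetric square sits inside $X$. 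By the uniqueness built into \cite[Theorem~4.19]{LMV} — the bijectivity of $\Omega_{\bm\xi,\bm z}\to \mathrm{s}\Omega_{\bm\lambda,\bm z}$, $\sqrt{X}\mapsto X$ — the preimage of $X$ is unique, so $\overline{\sqrt{X}} = \sqrt{X}$; that is, $\sqrt{X}$ is real. I would also note that since $\varphi,\psi$ can be taken monic, the $\mathrm{SL}_2$-ambiguity in the square-root extraction is only the residual finite ambiguity, which does not interfere with this argument.

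The main obstacle is making precise that the square root is genuinely well-defined and conjugation-equivariant — i.e.\ that the map in \cite[Theorem~4.19]{LMV} really is a bijection with the square-root as its inverse, and that no choices enter that could be affected by conjugation. I would handle this by quoting the bijectivity of $\Omega_{\bm\xi,\bm z}\to \mathrm{s}\Omega_{\bm\lambda,\bm z}$ stated in the paragraph just above the Lemma, which already asserts uniqueness of $\sqrt{X}$; once uniqueness is in hand, conjugation-equivariance is automatic because conjugation sends a valid square root to a valid square root of $\bar X = X$, and uniqueness forces it to be the same space. This reduces the whole lemma to the two short inclusions above plus one appeal to the cited bijection.
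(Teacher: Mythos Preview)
Your argument is essentially correct, but it takes a different route from the paper's. The paper does \emph{not} invoke the bijectivity from \cite[Theorem~4.19]{LMV}; instead it gives a direct, elementary computation. Assuming $\deg\varphi<\deg\psi$, the paper writes a real basis of $X$ as $a_i\varphi^2+b_i\varphi\psi+c_i\psi^2$, $i=1,2,3$, observes by comparing top degrees that the $c_i$ are real, and then performs row reduction to reach $a_1\varphi^2$, $\varphi(a_2\varphi+b_2\psi)$, $\dots$ with $a_1,b_2$ real and nonzero. Since $\varphi$ is monic and $a_1\varphi^2$ is real, $\varphi$ itself is real; dividing the second polynomial by $\varphi$ gives a second real element of $\sqrt{X}$, completing the proof. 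This is entirely self-contained and uses nothing beyond degrees and linear algebra.

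Your conceptual proof---show that $\sqrt{\,\cdot\,}$ is conjugation-equivariant via uniqueness---is cleaner and generalizes better, but has one small point you should tighten. The bijectivity you quote is a bijection $\Omega_{\bm\xi,\bm z}\to \mathrm s\Omega_{\bm\lambda,\bm z}$, so invoking injectivity to conclude $\overline{\sqrt X}=\sqrt X$ requires first checking $\overline{\sqrt X}\in\Omega_{\bm\xi,\bm z}$. This is true (the ramification exponents of $\overline{\sqrt X}$ at $\bar z_s$ equal those of $\sqrt X$ at $z_s$, and the reality of $X$ forces the data $(\bm\xi,\bm z)$ to be conjugation-invariant as a multiset, so $\Omega_{\bm\xi,\bar{\bm z}}=\Omega_{\bm\xi,\bm z}$), but you should say so explicitly rather than leave it implicit in the phrase ``valid square root.'' Alternatively, one can prove the stronger uniqueness statement that \emph{any} two-dimensional space $Y$ (without base points) with $\operatorname{span}\{fg:f,g\in Y\}=X$ must equal $\sqrt X$; this follows by looking at the unique element of $X$ vanishing to order $\geqslant 2$ at a generic point and recognizing it as a square. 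Either fix closes the gap.
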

\begin{proof}It is obvious that $X$ is real if $\sqrt{X}$ is real.

Conversely, if $X$ is real, then there exist complex numbers $a_i$, $b_i$, $c_i$, $i=1,2,3$, such that
\begin{gather*}
a_i\varphi^2+b_i\varphi\psi+c_i\psi^2,\qquad i=1,2,3,
\end{gather*}are real polynomials and form a basis of $X$. Without loss of generality, we assume $\deg \varphi<\deg \psi$. Since $\deg \varphi<\deg \psi$, we have $c_i\in\mathbb R$, $i=1,2,3$. At least one of $c_i$ is nonzero. We assume $c_3\ne 0$. By subtracting a proper real multiple of $a_3\varphi^2+b_3\varphi\psi+c_3\psi^2$, we assume further $c_1=c_2=0$. Continuing with the previous step, we assume that $b_1=0$, $b_2\ne 0$, $a_1\ne 0$ and hence obtain that $a_1,b_2,c_3\in\mathbb R$. Then $a_1\varphi^2$ is a real polynomial, so is $\varphi$. Therefore, $a_2\varphi+b_2\psi$ is also a real polynomial, which implies that the space of polynomials $\sqrt{X}$ is also real.
\end{proof}

Because of Lemma \ref{red usual grass}, the case $N=3$ is reduced to the lower bound for the number of real solutions to osculating Schubert problems of $\operatorname{Gr}(2,d)$, see \cite{MT}. Moreover,
Corollary~\ref{bound thm} also applies for this case by putting $N=3$, $r=1$, and $\mathfrak g_N=\mathfrak{sl}_2$.

\section[Some data for small $N$]{Some data for small $\boldsymbol{N}$}\label{sec data}
In this section, we give some data obtained from Corollary~\ref{bound thm} when $N$ is small. Since the cases $N=2,3$ reduce to the cases of~\cite{MT}, we start with $N=4$.

We always assume that $\bm\lambda$, $\bm k$, $\bm z$ are invariant under conjugation. By Remark~\ref{rem par}, we shall only consider the cases that $\dim (V_{\bm\lambda})^{\mathfrak g_N}\geqslant 2$. We also exclude the cases that $\bm z\in\mathbb R\mathring{\mathbb P}_n$. In particular, the cases that all pairs $\big(\lambda^{(s)},k_s\big)$, $s=1,\dots,n$, are different.

We write the highest weights in terms of fundamental weights, for example $(1,0,0,1)=\omega_1+\omega_4$. We also write $\big(\lambda^{(1)}\big)_{k_1},\dots,\big(\lambda^{(n)}\big)_{k_n}$ for $(\bm\lambda,\bm k)$ and simply write $\lambda^{(s)}$ for $(\lambda^{(s)})_{0}$. We use $\big(\lambda_1^{(s)},\lambda_2^{(s)}\big)_{k_s}^{\otimes m}$ to indicate that the pair $\big(\big(\lambda_1^{(s)},\lambda_2^{(s)}\big),k_s\big)$ appears in $(\bm\lambda,\bm k)$ exactly $m$ times. For instance, $(0,1)_1,(0,1)^{\otimes 3}$ represents the pair $(\bm\lambda,\bm k)$ where $\bm\lambda=((0,1),(0,1),(0,1),(0,1))$ and $\bm k=(1,0,0,0)$.

\subsection[The case $N=4,5$]{The case $\boldsymbol{N=4,5}$}\label{sec:n=4}
For each $\mathfrak g_4$-weight $\lambda=(\lambda_1,\lambda_2)$, denote by $\lambda_C$ the $\mathfrak g_5$-weight $(\lambda_2,\lambda_1)$. Note that $\mathfrak g_4=\mathfrak{so}_5$ is isomorphic to $\mathfrak g_5=\mathfrak{sp}_4$, the lower bound obtained from the ramification data $\bm\lambda=\big(\lambda^{(1)},\dots,\lambda^{(n)}\big)$ and $\bm k=(k_1,\dots,k_n)$ of $\mathfrak g_4$ is the same as that obtained from the ramification data $\bm{\lambda_C}=\big(\lambda_C^{(1)},\dots,\lambda_C^{(n)}\big)$ and $\bm k=(k_1,\dots,k_n)$ of~$\mathfrak g_5$.

\begin{table}[h!] \centering
 \begin{tabular}{|c|c|c|c|c|}
 \hline
 ramification data & dimension & $c=1$ & $c=2$ & $c=3$\\
 \hline
 $(0,1)^{\otimes 6}$
 & 14 & 2 & 2 & 6\\
 \hline
 $(1,0)^{\otimes 3},(0,1)^{\otimes 2}$ & 4 & 0,2 & 2 & \\
 \hline
 $(1,0)^{\otimes 3},(1,0)_1$ & 3 & 1 & & \\
 \hline
 $(1,0)^{\otimes 4},(0,0)_1$ & 3 & 1 & 3 & \\
 \hline
 $(0,2),(0,1)^{\otimes 4}$ & 6 & 0 & 2 & \\
 \hline
 $(0,0)_1,(0,1)^{\otimes 4}$ & 3 & 1 & 3 & \\
 \hline
 $(1,0),(0,1)^{\otimes 4}$ & 5 & 1 & 1 & \\
 \hline
 $(1,1),(0,1)^{\otimes 3}$ & 2 & 0 & & \\
 \hline
 $(0,1)_1,(0,1)^{\otimes 3}$ & 3 & 1 & & \\
 \hline
 $(0,2)^{\otimes 2},(0,1)^{\otimes 2}$ & 3 & 1 & 3 & \\
 \hline
 $(1,0)^{\otimes 2},(0,1)^{\otimes 2}$ & 2 & 0 & 2 & \\
 \hline
 $(0,2),(1,0),(0,1)^{\otimes 2}$ & 2 & 0 & & \\
 \hline
 $(1,0)_1,(1,0),(0,1)^{\otimes 2}$ & 2 & 0 & &\\
 \hline
 $(1,0)^{\otimes 2},(0,1)_1,(0,1)$ & 2 & 0 & &\\
 \hline
 $(1,1),(1,0)^{\otimes 2},(0,1)$ & 2 & 0 & &\\
 \hline
 \end{tabular}
 \caption{The case $N=4,5$.} \label{table:1}
\end{table}

In Table \ref{table:1}, we give lower bounds for the cases from $\operatorname{Gr}(4,7)$ and $\operatorname{Gr}(5,10)$. By the observation above, we transform the case from $\operatorname{Gr}(5,10)$ to its counter part in $\operatorname{Gr}(4,d)$ for some $d$ depending on the ramification data. The number in the column of dimension is equal to $\dim(V_{\bm\lambda})^{\mathfrak g_4}$ for the corresponding ramification data $\bm\lambda$ in each row. The numbers in the column of $c=i$ equal the lower bounds computed from Corollary \ref{bound thm} with the corresponding $c$.

For a given $c$, there may exist several choices of complex conjugate pair corresponding to different pairs of $\mathfrak g_N$-weights. If the corresponding lower bounds are the same, we just write one number. For example, in the case of $(0,2)^{\otimes 2}$, $(0,1)^{\otimes 2}$ and $c=1$ of Table \ref{table:1}, the complex conjugate pair may correspond to the weights $(0,2)^{\otimes 2}$ or $(0,1)^{\otimes 2}$. However, they give the same lower bound $1$. Hence we just write $1$ for $c=1$. If the bounds are different, we write the lower bound with the conjugate pairs corresponding to the leftmost $2c$ weights first while the one with the conjugate pairs corresponding to the rightmost $2c$ weights last, in terms of the order of the ramification data displayed on each row. Since we have at most 3 cases, the possible remaining case is clear. For instance, in the case $(0,1,0)^{\otimes 4}$, $(0,0,1)^{\otimes 4}$ and $c=2$ of Table~\ref{table:2}, the two complex conjugate pairs corresponding to $(0,1,0)^{\otimes 4}$ give the lower bound $12$ while the two complex conjugate pairs corresponding to $(0,0,1)^{\otimes 4}$ give the lower bound $24$. The remaining case, where the two conjugate pairs corresponding to $(0,1,0)^{\otimes 2}$ and $(0,0,1)^{\otimes 2}$, gives the lower bound~$2$.

\subsection[The case $N=6$]{The case $\boldsymbol{N=6}$}
In what follows, we give lower bounds for ramification data consisting of fundamental weights when $N=6$. We follow the same convention as in Section~\ref{sec:n=4}.

\begin{table}[h!] \centering
 \begin{tabular}{|c|c|c|c|c|c|}
 \hline
 ramification data & dimension & $c=1$ & $c=2$ & $c=3$ & $c=4$ \\
 \hline
 $(0,0,1)^{\otimes 4}$
 & 4 & 0 & 4 & & \\
 \hline
 $(0,1,0)^{\otimes 4}$
 & 6 & 2 & 6 & & \\
 \hline
 $(1,0,0)^{\otimes 4}$ & 3 & 1 & 3 & & \\
 \hline
 $(0,0,1)^{\otimes 2},(0,1,0)^{\otimes 2}$ & 3 & 1 & 3 & &\\
 \hline
 $(0,0,1)^{\otimes 2},(1,0,0)^{\otimes 2}$ & 2 & 0 & 2 & &\\
 \hline
 $(0,1,0)^{\otimes 2},(1,0,0)^{\otimes 2}$ & 3 & 1 & 3 & &\\
 \hline
 $(0,0,1)^{\otimes 6}$ & 30 & 2 & 2 & 10 &\\
 \hline
 $(0,1,0)^{\otimes 6}$ & 130 & 8 & 14 & 36 &\\
 \hline
 $(1,0,0)^{\otimes 6}$ & 15 & 3 & 3 & 7 &\\
 \hline
 $(0,1,0)^{\otimes 2},(0,0,1)^{\otimes 4}$ & 34 & 4,2 & 0,6 & 16 &\\
 \hline
 $(0,1,0)^{\otimes 4},(0,0,1)^{\otimes 2}$ & 55 & 3,1 & 3,7 & 19 &\\
 \hline
 $(1,0,0)^{\otimes 2},(0,0,1)^{\otimes 4}$ & 16 & 2 & 0,4 & 10 &\\
 \hline
 $(1,0,0)^{\otimes 4},(0,0,1)^{\otimes 2}$ & 10 & 0,2 & 2,0 & 6 &\\
 \hline
 $(1,0,0)^{\otimes 2},(0,1,0)^{\otimes 4}$ & 46 & 2 & 6 & 18 & \\
 \hline
 $(1,0,0)^{\otimes 4},(0,1,0)^{\otimes 2}$ & 21 & 1,3 & 5,3 & 11 & \\
 \hline
 $(1,0,0)^{\otimes 2},(0,1,0)^{\otimes 2},(0,0,1)^{\otimes 2}$ & 20 & 2 & 0,4,0 & 10 & \\
 \hline
 $(0,0,1)^{\otimes 8}$ & 330 & 20 & 6 & 0 & 50\\
 \hline
 $(0,1,0)^{\otimes 8}$ & 6111 & 69 & 59 & 113 & 311\\
 \hline
 $(1,0,0)^{\otimes 8}$ & 105 & 15 & 9 & 7 & 25\\
 \hline
 $(0,1,0)^{\otimes 4},(0,0,1)^{\otimes 4}$ & 984 & 22,28 & 12,2,24 & 0,38 & 108\\
 \hline
 $(1,0,0)^{\otimes 4},(0,0,1)^{\otimes 4}$ & 116 & 6,12 & 8,2,12 & 0,10 & 32\\
 \hline
 $(1,0,0)^{\otimes 4},(0,1,0)^{\otimes 4}$ & 510 & 6,12 & 22,4,18 & 28,18 & 74\\
 \hline
 \end{tabular}
 \caption{The case $N=6$.}\label{table:2}
\end{table}

\subsection*{Acknowledgements}
The author thanks E.~Mukhin and V.~Tarasov for useful discussions. The author also thanks the referees for their comments and suggestions that substantially improved the first version of this paper. This work was partially supported by Zhejiang Province Science Foundation, grant No.~LY14A010018.

\pdfbookmark[1]{References}{ref}
\LastPageEnding

\end{document}